\newcommand{\fmap}{{{f}}}
\newcommand{\x}{\mathrm{\bf{x}}}
\newcommand{\Sig}{\Sigma}
\newcommand{\ra}{\rightarrow}
\newcommand{\Dbb}{\mathbb{D}}
\newcommand{\Acal}{\mathcal{A}}
\newcommand{\Dcal}{\mathcal{D}}
\newcommand{\Qcal}{\mathcal{Q}}
\newtheorem{thm}{Theorem}[section]
\newtheorem{prop}[thm]{Proposition}
\newtheorem{cor}[thm]{Corollary}
\newtheorem{lem}[thm]{Lemma}
\theoremstyle{definition}
\newtheorem{defn}[thm]{Definition}
\newcommand{\thetabarp}{{\tilde{\theta}}}
\newcommand{\uf}{{\mathfrak{l}}}
\newcommand{\A}{{\mathbb{A}}}
\newcommand{\tf}{{\mathfrak{t}_q}}
\newcommand{\bs}{{\vec{b}}}
\newcommand{\ns}{{\vec{n}}}
\newcommand{\ms}{{\vec{m}}}
\newcommand{\zs}{{\vec{z}}}
\newcommand{\ws}{{\vec{w}}}
\newcommand{\ps}{{\vec{p}}}
\newcommand{\qs}{{\vec{q}}}
\newcommand{\torsion}{{\mathfrak{t}_q'}}
\newcommand{\h}{\mathbf{h}}
\newcommand{\var}{\mathsf{u}}
\newcommand{\varw}{\mathsf{w}}
\newcommand{\map}{\mathfrak{f}}
\newcommand{\alphas}{{\vec{\alpha}}}
\newcommand{\betas}{{\vec{\beta}}}
\newcommand{\seq}{{\mathbf{a}}}
\newcommand{\seqb}{{\mathbf{b}}}
\newcommand{\seqc}{{\mathbf{c}}}
\newcommand{\gammas}{\vec{\gamma}}
\newcommand{\y}{\mathbf{y}}
\newcommand{\CFT}{\mathrm{CF}}
\newcommand{\HFT}{\mathrm{HF}}
\begin{document}
\title{Rational tangle replacements\\ and knot Floer homology}%

\author{Eaman Eftekhary}%
\address{School of Mathematics, Institute for Research in 
Fundamental Sciences (IPM), P. O. Box 19395-5746, Tehran, Iran}%
\email{eaman@ipm.ir}
%\date{December 2005}%
%\dedicatory{}%
%\commby{}%
% ----------------------------------------------------------------
\begin{abstract}
From the link Floer complex of a link $K$, we extract a lower bound $\torsion(K)$ for the rational unknotting number of $K$ (i.e. the minimum number of rational replacements required to unknot $K$). Moreover, we show that the torsion obstruction $\tf(K)=\widehat{\mathfrak{t}}(K)$ from an earlier paper of Alishahi and the author is a lower bound for the proper rational unknotting number. Moreover,  $\tf(K\#K')=\max\{\tf(K),\tf(K')\}$ and $\torsion(K\#K')=\max\{\torsion(K),\torsion(K')\}$. For the torus knot $K=T_{p,pk+1}$ we compute $\torsion(K)=\lfloor p/2\rfloor$ and $\tf(K)=p-1$. 
\end{abstract}

\maketitle
%\tableofcontents

%\newpage
\section{Introduction} 
As a byproduct of the study of cobordism maps in Heegaard Floer theory of tangles, Alishahi and the author introduced lower bounds for the Gordian distance $u(K,K')$ of a pair of knots $K,K'$, and in particular the unknotting number $u(K)$ of $K$ \cite{AE-cobordisms-v1}, which developed into the independent paper \cite{AE-unknotting}. Alishahi applied the strategy of  \cite{AE-unknotting}  to bound $u(K,K')$ using Khovanov homology \cite{Alishahi-unknotting}. Her work was followed by other lower bounds with roots in Khovanov homology (c.f.  \cite{AD-unknotting} and  \cite{Cetal-unknotting}). Recently, Iltgen, Lewark and Marino proved that their invariant $\lambda$ \cite{ILM-rational},  the best known unknotting bound from Khovanov homology,  is in fact a lower bound for the {\emph{proper rational distance}} $u_{q}$,  defined as follows:
\begin{defn}\label{defn:rational-unknotting}
The oriented links $K$ and $K'$ are related by a {\emph{rational replacement}} (or an RR) if  replacing a rational tangle $T$ in $K$ with another rational tangle $T'$ gives $K'$. The replacement is called an {\emph{orientation-preserving}} rational replacement (or an ORR) if it respects the orientations, and  is called a {\emph{proper}} rational replacement (or a PRR) if the arcs of $T$ and $T'$ connect the same tangle end points. The {\emph{rational distance}} $u'_{q}(K,K')$ (resp.  the {\emph{PR-distance}} $u_q(K,K')$ and the {\emph{OR-distance}} $u''_{q}(K,K')$) is defined as the minimum number of RRs (resp. PRRs and ORRs) required to change $K$ to $K'$. The {\emph{rational unknotting number}} $u_q'(K)$,the {\emph{PR-unknotting number}} $u_q(K)$ and the {\emph{OR-unknotting number}} $u_q''(K)$ of $K$ are defined as the minimum  rational distance, PR-distance  and OR-distance of $K$ from an unlink, respectively.  
\end{defn} 

Rational unknotting was considered by Lines \cite{lines} and McCoy \cite{McCoy}. More, recently, McCoy and Zenter adapted the so called Montesinos trick, to study proper rational unknotting as well \cite{MZ-rr}. The work of Iltgen, Lewark and Marino \cite{ILM-rational} is the first connection between (proper) rational unknotting and the quantum invariants.\\

 In this paper, we use link Floer homology to bound rational distance and OR-distance (and thus, PR-distance) from below. Let $K$ be an oriented link and $\ps$ denote  a {\emph{marking}} of $K$, i.e. a collection of $|\ps|$ marked points on $K$ which includes at least one marked point on each connected component. Let $\CFT(K,\ps)$ denote the link chain complex for  $(K,\ps)$, constructed from the Heegaard diagram $(\Sigma,\alphas,\betas,\zs,\ws)$. Thus, $\zs$ and $\ws$ are collections of $|\ps|$ marked points on $\Sig$ which correspond to $\ps$ (c.f. \cite{AE-tangles} and \cite{OS-link}). $\CFT(K,\ps)$ is generated over $\F[\var,\varw]$ by the intersection points in $\T_\alphas\cap\T_\betas$ (where $\F=\Z/2$) and is equipped with the differential 
 \begin{align*}
d(\x)=\sum_{\y\in\T_\alphas\cap \T_{\betas}}\ \ \sum_{\substack{\phi\in\pi_2(\x,\y)\\ \mu(\phi)=1}}\#(\Mhat(\phi))\var^{n_\zs(\phi)}\varw^{n_\ws(\phi)}\cdot \y.
\end{align*}
For a $\F[\var,\varw]$-algebra $\A$,  $\HFT(K,\ps,\A)$ denotes the homology of  $\CFT(K,\ps,\A):=\CFT(K,\ps)\otimes_{\F[\var,\varw]}\A$. The pseudometric $\uf_\A(K,K')$ between the oriented links $K$ and $K'$  is defined as the least  $k\in\Z$ such that there are markings $\ps$ of $K$ and $\ps'$  of $K'$ with $|\ps|=|\ps'|$ and chain maps 
\begin{align*}
&\fmap:\CFT(K,\ps,\A)\ra \CFT(K',\ps',\A)\quad\text{and}\quad\fmap':\CFT(K',\ps',\A)\ra \CFT(K,\ps,\A),
\end{align*}
with $\fmap\circ\fmap'$ and $\fmap'\circ\fmap$ chain homotopic to $\var^k$. Throughout the paper, set $\A=\A'=\F[\var]$, while the action of $\varw$ on $\A$ and $\A'$ is defined as multiplication by $0$ and $\var$, respectively. %The chain complex $\CFT(K,\ps,\A')$ does not depend on the orientation of $K$. 
We denote $\uf_\A(K,K')$ and $\uf_{\A'}(K,K')$ by $\tf(K,K')$ and $\torsion(K,K')$, respectively. If $u(K,K')$ denotes the Gordian distance between $K$ and $K'$, Alishahi and the author prove (see \cite{AE-unknotting}): 
  \[\tf(K,K'),\torsion(K,K')\leq \uf_{\F[\var,\varw]}(K,K')\leq u(K,K').\]

\begin{thm}\label{thm:main-introduction}
Given the oriented links $K,K'$ we have 
\[\tf(K,K')\leq u''_{q}(K,K')\leq u_{q}(K,K')\quad\quad\text{and}\quad\quad\torsion(K,K')\leq u'_q(K,K').\] 
In particular, $\tf(K):=\tf(K,U)\leq u_{q}(K)$ and $\torsion(K):=\torsion(K,U)\leq u'_{q}(K)$, where $U$ is the unknot.
\end{thm}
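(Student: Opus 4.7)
The plan is to reduce the inequalities to a single rational replacement, then build the required chain maps at the level of tangle Floer complexes using the framework of \cite{AE-tangles}.

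\textbf{Reduction and localization.} The pseudometrics $\tf$ and $\torsion$ are subadditive under composition of chain maps, so it suffices to prove that a single ORR (resp.\ RR) between $K$ and $K'$ produces chain maps between $\CFT(K, \ps, \A)$ and $\CFT(K', \ps', \A)$ (resp.\ with $\A'$) whose compositions are chain homotopic to multiplication by $\var$; the middle inequality $u''_{q}(K,K') \leq u_{q}(K,K')$ is immediate from the definitions. A single RR is supported in a $3$-ball $B$ meeting $K$ and $K'$ in rational tangles $T$ and $T'$ with common exterior. Using the pairing theorem for tangle Floer homology from \cite{AE-tangles}, the link Floer complex $\CFT(K, \ps, \A)$ splits as a tensor product of the tangle Floer complex of $T$ with that of the exterior, and likewise for $K'$; hence the problem reduces to producing chain maps between $\CFT(T)$ and $\CFT(T')$ over the appropriate tangle algebra, with compositions chain homotopic to $\var$.

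\textbf{Tangle-level replacement.} The heart of the argument is the following claim: for any two rational tangles $T, T'$ in a $3$-ball (with compatible endpoint orientations in the ORR case, without any orientation constraint in the RR case), there exist chain maps $\phi \colon \CFT(T) \to \CFT(T')$ and $\psi \colon \CFT(T') \to \CFT(T)$ with $\phi \circ \psi$ and $\psi \circ \phi$ chain homotopic to multiplication by $\var$. I would prove this by exploiting the classification of rational tangles by slope in $\Qbb \cup \{\infty\}$ and showing that the tangle Floer complex of a rational tangle has a particularly simple form over the relevant tangle algebra, reflecting the triviality of the double branched cover as a solid torus. The maps $\phi, \psi$ can then be constructed either from an explicit Heegaard diagram realizing both tangles, via triangle counts associated to handleslides, or from a cobordism decomposition of an isotopy between stacked diagrams of $T$ and $T'$, supplemented by chain homotopies at each intermediate stage. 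The algebraic distinction between $\A$ (with $\varw$ acting as $0$) and $\A'$ (with $\varw$ acting as $\var$) mirrors the topological distinction: the former requires respecting orientations, hence bounds $u''_q$ and $u_q$, while the latter identifies the two families of basepoints and therefore accommodates any rational replacement, including ones not preserving orientations.

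\textbf{Main obstacle.} The key subtlety is controlling the power of $\var$ in the composition. Were one to implement a rational replacement by decomposing its slope into a continued fraction and performing a sequence of crossing-change-like moves, the naive outcome would be a composition chain homotopic to $\var^n$ with $n$ the complexity of the slope, rather than the single $\var$ needed. I expect the necessary cancellations to arise from explicit chain homotopies supported inside $B$, reflecting the topological fact that a rational tangle replacement is a single elementary operation regardless of the slopes involved. Establishing these cancellations explicitly, either by an inductive argument on the continued fraction expansion or by a direct bordered or tangle Floer computation of the relevant morphism spaces, is the principal technical step of the proof.
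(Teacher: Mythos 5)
Your high-level plan (reduce to a single rational replacement, then produce chain maps whose compositions are chain homotopic to $\var$) matches the paper's, and you correctly identify the central difficulty: why should a rational replacement of arbitrary slope cost only a single factor of $\var$ rather than $\var^n$ with $n$ governed by the continued fraction length? But the proposal stops exactly at this point, stating only that you ``expect'' chain homotopies to produce the necessary cancellations. This is the entire content of the theorem, not a loose end, and without an argument here the proof does not exist.

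The paper resolves the difficulty not by cancellation but by avoiding the continued-fraction decomposition altogether. Rather than factoring the replacement through crossing changes or invoking the tangle Floer pairing theorem, the paper builds a single Heegaard triple $(\Sigma,\alphas,\gammas_{\seq},\gammas_{\seqb},\zs,\ws)$ in which the diagrams for the two links differ by a single curve $\gamma_0$ obtained from $\beta_0$ by a mapping class $\map_{\seqb}$ of the tangle boundary. The problem then localizes (after stabilization by a factor $V^{g+|\ps|-1}$, as in \cite[Proposition 5.1]{AE-cobordisms}) to a genus-zero piece $(S,\beta_0,\gamma_0,\delta_0)$ containing exactly four marked points. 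The decisive observation in Lemma~\ref{lem:top-generator-2} is structural: in the explicit model $H_{n,\ell}$ built from two disks $R_1,R_2$ glued along $\beta_0$, every disk contributing to the differential, and every triangle contributing to $\Psi'_{\seq,\seqb,\seq}$, has domain equal to one bigon (passing through exactly one marked point) union some rectangles disjoint from the marked points, so $n_{\zs_0}(\phi)+n_{\ws_0}(\phi)=1$ \emph{independent of $n$ and $\ell$}. That is the mechanism that caps the exponent at $1$; no homotopy-cancellation argument is involved or needed. Your route via the tangle Floer pairing theorem could in principle be made to work, but it imports machinery the paper does not use and, as written, leaves precisely the step that requires proof unargued.
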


Given relatively prime integers $1<p<q$, write $(p,q)\leadsto (p-2i,q-2j)$, where  $0<i\leq p/2$ and $j$ are chosen so  that $iq=jp\pm 1$. Let $k(p,q)$ denote the smallest $k\in\Z^+$ so that 
\[(p,q)=(p_0,q_0)\leadsto (p_1,q_1)\leadsto (p_2,q_2)\leadsto \cdots\leadsto (p_k,q_k),\]
with $p_k\in\{0,1\}$. In particular, $k(p,q)\leq \lfloor p/2\rfloor$ for all  $1<p<q$, $k(p,pn+1)=\lfloor p/2\rfloor$ for all $p>1$ and $k(p,pn+2)=1$ for odd values of $p>1$.

\begin{thm}\label{thm:torus-knots}
If $1<p<q$ are relatively prime integers we have 
\begin{align*}
&\tf(T_{p,q})=p-1\leq u''_{q}(T_{p,q})\leq u_{q}(T_{p,q})\quad\quad\text{and}\quad\quad\torsion(p,q)\leq u'_q(T_{p,q})\leq k(p,q).
\end{align*}
Moreover, given the integers $p>1$ and $n>0$, we have
\begin{align*}
\torsion (T_{p,pn+1})= u'_q(T_{p,pn+1})=k(p,pn+1)=\left\lfloor \frac{p}{2}\right\rfloor.
\end{align*}
\end{thm}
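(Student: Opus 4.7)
The plan is to combine Theorem \ref{thm:main-introduction} with two independent ingredients. First, Theorem \ref{thm:main-introduction} immediately gives the inequalities $\tf(T_{p,q})\leq u''_{q}(T_{p,q})\leq u_{q}(T_{p,q})$ and $\torsion(T_{p,q})\leq u'_q(T_{p,q})$, so what remains to prove is: (a) the equality $\tf(T_{p,q})=p-1$; (b) the topological upper bound $u'_q(T_{p,q})\leq k(p,q)$; and (c) for $T_{p,pn+1}$, the matching lower bound $\torsion(T_{p,pn+1})\geq\lfloor p/2\rfloor$, which combined with (b) and the stated identity $k(p,pn+1)=\lfloor p/2\rfloor$ forces all four quantities to the common value $\lfloor p/2\rfloor$. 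Claims (a) and (c) are Floer-theoretic computations for an L-space knot, while claim (b) is a classical surgery construction on torus knots.

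For (b), I would realize each step $(p,q)\leadsto(p-2i,q-2j)$ by a single PRR. The unimodularity $iq-jp=\pm 1$ is equivalent to
\begin{equation*}
p(q-2j)-q(p-2i)=\pm 2,
\end{equation*}
so $T_{p,q}$ and $T_{p-2i,q-2j}$ can be realized as the $(p,q)$- and $(p-2i,q-2j)$-curves on a common Heegaard torus $T^2\subset S^3$, meeting algebraically in $\pm 2$ points inside a disk $D\subset T^2$ and coinciding outside $D$. Thickening $D$ to a ball $B\subset S^3$, the pair $B\cap T_{p,q}$ and $B\cap T_{p-2i,q-2j}$ consists of two rational tangles sharing the same four endpoints on $\partial B$, so their exchange is a PRR transforming one torus knot into the other. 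Since $\gcd(p-2i,q-2j)=1$, iterating the procedure $k(p,q)$ times terminates either at a $(0,1)$- or a $(1,q_k)$-curve, both of which are unknots, and yields the claimed upper bound.

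For (a) and (c), I use that $T_{p,q}$ is an L-space knot, so its link Floer complex $\CFT(T_{p,q},\ps)$ over $\Fbb[\var,\varw]$ is a staircase whose corners are determined by the Newton polygon of the Alexander polynomial. The invariant $\tf$ is read off the quotient $\varw=0$: setting $\varw=0$ kills the vertical arrows of the staircase and leaves a free $\Fbb[\var]$-complex whose $\var$-torsion order equals the length of the longest horizontal step. Since the Euclidean algorithm for $q/p$ forces such a step of length $p-1$, this yields $\tf(T_{p,q})=p-1$. The invariant $\torsion$ comes instead from the diagonal quotient $\varw=\var$; for $T_{p,pn+1}$ the staircase has a particularly uniform shape (with all ``short'' steps of the same length), and a direct count in this quotient shows that the resulting $\var$-torsion tower has order exactly $\lfloor p/2\rfloor$. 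Comparing with $\CFT(U,\Fbb[\var])$ then produces the required lower bound on $\torsion(T_{p,pn+1})$, and the four-way equality follows.

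The main obstacle I anticipate is the precise analysis in (c). The $\varw=0$ quotient preserves the Alexander grading and makes the $\var$-torsion visible on individual staircase generators; by contrast, the diagonal quotient $\varw=\var$ mixes the two arms of every corner of the staircase, so one must work with a model in which horizontal and vertical arrows are superposed. Ruling out $\torsion(T_{p,pn+1})<\lfloor p/2\rfloor$ therefore requires showing that no pair of $\Fbb[\var]$-chain maps between the collapsed staircase and $\CFT(U,\Fbb[\var])$ can compose to a lower power of $\var$; I expect this to follow from a filtration argument respecting the diagonal grading, mirroring the cleaner torsion-order count that suffices for $\tf$.
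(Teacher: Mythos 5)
Your overall structure matches the paper's: Theorem~\ref{thm:main-introduction} supplies the lower bounds, the staircase model of the L-space knot complex is used to compute $\tf$ and $\torsion$, and a topological construction supplies the upper bound $u'_q(T_{p,q})\leq k(p,q)$. Your sketches of (a) and (c) are in the right spirit; the paper makes them precise with the closed formulas $\tf(T_{p,q})=\max\{b_{2i-1}\}$ and $\torsion(T_{p,q})=\max\{\min\{b_{2i-1},b_{2i}\}\}$, where $\bs=(b_1,\ldots,b_{2n})$ records the gaps in the exponents of the Alexander polynomial of $T_{p,q}$, together with the explicit list $\bs_{p,pk+1}$.

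Your argument for the upper bound (b), however, has a genuine gap, and in fact as stated it cannot be correct. First, the two torus knots cannot be ``realized as the $(p,q)$- and $(p-2i,q-2j)$-curves on a common Heegaard torus \ldots{} coinciding outside $D$'': their classes in $H_1(T^2;\Z)$ are $(p,q)$ and $(p-2i,q-2j)$, which differ by the nonzero class $(2i,2j)$. Two $1$-cycles on $T^2$ that agree outside a disk are homologous in $T^2$, so this picture is impossible; the curves can at best be isotoped to intersect minimally in two points, which is a much weaker statement and does not produce a ball outside of which the knots agree. Second, even if one could produce such a ball, the replacement you describe would not be a PRR: sharing the same four endpoints on $\partial B$ is true of every RR, while a PRR additionally requires that the two tangles connect the endpoints in the same pairs, which fails for the unoriented smoothing that interpolates between the two slopes. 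Indeed, were your construction to give PRRs, you would conclude $u_q(T_{3,4})\leq k(3,4)=1$, contradicting $\tf(T_{3,4})=2\leq u_q(T_{3,4})$ from part (a) of your own argument. The paper instead realizes each step $(p_i,q_i)\leadsto(p_{i+1},q_{i+1})$ by an \emph{unoriented crossing resolution} in the standard diagram of the torus knot (Figure~\ref{fig:torus-knot} for $q=pk+1$); such a resolution is an RR but neither a PRR nor an ORR, and it bounds $u'_q$ exactly as needed, without contradicting the $\tf$ bound on $u_q$ and $u''_q$.
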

In fact, in all our computations the equality $\torsion(T_{p,q})= u'_q(T_{p,q})= k(p,q)$ is satisfied. On the other hand, the computation of the invariants for knots with at most $10$ crossings implies that $\tf(K)=\torsion(K)=1$, unless $K$ is one of the knots $8_{19}$,  $10_{124}$, $10_{128}$, $10_{139}$, $10_{152}$, $10_{154}$ and $10_{161}$. If $K$ is any of these latter $7$ knots we have $\tf(K)=2$.\\

Given a knot $K$ (i.e. under the assumption that $K$ has one component), let  
\begin{align*}
Q_K(q,t)&:=\sum_{i,j} \mathrm{dim}\left(\widehat{\mathrm{HFK}}_j(K,i)\right)\cdot q^jt^i
\end{align*}
denote the  polynomial  encoding the rank of $\widehat{\mathrm{HFK}}(K)$ in different Alexander and homological gradings. $K$ has {\emph{thin knot Floer homology}} if $q^iQ_K(q,t)$ is a polynomial in $qt$ for some integer $i$.   Denote the set of all knots with thin knot Floer homology  by $\Qcal$. In particular, $K\in\Qcal$  if $K$ is alternating, or even quasi-alternating, by \cite{OS-alternating} and \cite{MO-quasi}. Since $\tf(K)=\torsion(K)=1$ for every $K\in\Qcal$, the following corollary follows from Theorem~\ref{thm:main-introduction}:

\begin{cor}\label{cor:quasi-alternating}
 Given an arbitrary knot $K$, 
\[u_q(K,\Qcal)\geq u''_q(K,\Qcal)\geq \tf(K)-1\quad\text{and}\quad u'_q(K,\Qcal)\geq \torsion(K)-1.\]
 In particular,  $T_{p,q}$ may not be changed to a quasi-alternating knot with less than $p-2$ PRRs. 
\end{cor}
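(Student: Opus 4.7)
The plan is to deduce the corollary as a short formal consequence of Theorem~\ref{thm:main-introduction}, combining the triangle inequality for the pseudometrics $\tf$ and $\torsion$ with the fact recalled in the paragraph preceding the statement that $\tf(K') = \torsion(K') = 1$ for every $K' \in \Qcal$.

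First I would verify that $\tf$ (and identically, $\torsion$) satisfies the triangle inequality: given chain maps $\fmap,\fmap'$ realizing $\tf(K,K') = k$ and chain maps $\gmap,\gmap'$ realizing $\tf(K',K'') = \ell$, the compositions $\gmap\circ\fmap$ and $\fmap'\circ\gmap'$ are chain maps whose compositions are chain homotopic to $\var^{k+\ell}$, witnessing $\tf(K,K'') \leq k+\ell$. Applied to the triple $K$, $K'$, $U$ with $K' \in \Qcal$, this gives
\[\tf(K) = \tf(K,U) \leq \tf(K,K') + \tf(K') \leq \tf(K,K') + 1,\]
so $\tf(K,K') \geq \tf(K) - 1$. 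Combining with the bound $u''_q(K,K') \geq \tf(K,K')$ from Theorem~\ref{thm:main-introduction} and taking the infimum over $K' \in \Qcal$ gives $u''_q(K,\Qcal) \geq \tf(K) - 1$; the further inequality $u_q(K,\Qcal) \geq u''_q(K,\Qcal)$ is definitional, since PRRs form a more restrictive class of moves than ORRs (every PRR is an ORR after the induced orientation). The analogous argument with $\torsion$ and $u'_q$ in place of $\tf$ and $u''_q$ yields the second inequality, and the final claim follows by substituting $\tf(T_{p,q}) = p-1$ from Theorem~\ref{thm:torus-knots} into the first bound.

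All the substantive work has already been done in Theorem~\ref{thm:main-introduction} (the comparison $\tf \leq u''_q$) and in the computation $\tf(T_{p,q}) = p-1$ in Theorem~\ref{thm:torus-knots}, so I expect no serious obstacle in the corollary itself. The one point requiring a little care is that the chain maps realizing the pairwise distances $\tf(K,K')$ and $\tf(K',K'')$ may \emph{a priori} use different markings of the intermediate link $K'$; one must check that markings $\ps, \ps', \ps''$ of $K, K', K''$ with matching cardinalities can be chosen so that the compositions are well-defined, which is permitted since the definition of $\tf$ takes an infimum over all admissible markings.
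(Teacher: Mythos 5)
Your proof is correct, but it reaches the conclusion by a slightly different route than the paper.

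The paper does not invoke the triangle inequality for the pseudometric $\tf$ directly. Instead, Corollary~\ref{cor:alternation} (the body-text version of the statement) is derived from Proposition~\ref{prop:distance-bounds}, which gives an explicit closed formula $\tf(K,K')=d\big(\ns_K\cdot 2^{|K'|},\,\ns_{K'}\cdot 2^{|K|}\big)$ in terms of the torsion sequences $\ns_K,\ns_{K'}$, together with the observation that a knot $K'$ with thin Floer homology has $\ns_{K'}=(1,\ldots,1)$. One then reads off directly from the combinatorial definition of $d$ that, with $\ns_K=(n_1\geq\cdots\geq n_k)$ and $\ns_{K'}$ all ones, any $\ell$ realizing $\ns_{K'}\cdot 2\geq \ns_K\cdot 2-\ell$ must satisfy $\ell\geq n_1-1=\tf(K)-1$. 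Your argument packages the same bound as a triangle inequality $\tf(K)\leq\tf(K,K')+\tf(K')$; this is valid, and the marking issue you flag is genuine but harmless for exactly the reason you state (the invariants $\ns_K,\ms_K$ are independent of $\ps$, and tensoring with extra copies of $V=\F[\var]\oplus\F[\var]$ to equalize marking cardinalities does not alter the conclusion — this is implicit in the structural description of $\H_{K,\ps}$ in Section~\ref{sec:properties}). What your route buys is that one never needs the full strength of the equality in Proposition~\ref{prop:distance-bounds}, only that $\tf$ is a pseudometric bounded above by $u''_q$; what the paper's route buys is that the triangle inequality need not be proved separately, since it is absorbed into the formula for $d$. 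One small caveat: for $K'$ the unknot one has $\tf(K')=0$ rather than $1$, but this only strengthens the inequality, so the conclusion $u''_q(K,\Qcal)\geq\tf(K)-1$ stands.
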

We also obtain the following obstruction for unknotting a knot with a single PRR, which is useful since  $\tau$ and $Q_K$ are known for knots with few crossings (e.g. see \cite{BG-computations}).
\begin{prop}\label{prop:Alexander-polynomial}
$u''_q(K)>1$  unless $Q_K(q,t)-t^{\tau(K)}$ is divisible by $1+qt$.
\end{prop}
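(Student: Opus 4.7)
The plan is to deduce the proposition from Theorem~\ref{thm:main-introduction} by analyzing the structure of $\HFT(K,\ps,\mathbb{F}[\var])$ as a bigraded module over the principal ideal domain $\mathbb{F}[\var]$, and then applying the Bockstein long exact sequence relating $\HFT(K,\ps,\mathbb{F}[\var])$ to $\widehat{\mathrm{HFK}}(K)$. By Theorem~\ref{thm:main-introduction}, the hypothesis $u''_q(K)\leq 1$ forces $\tf(K)\leq 1$. Choosing single markings on $K$ and on the unknot $U$, so that $\CFT(U,\ps',\mathbb{F}[\var])=\mathbb{F}[\var]$, the inequality $\tf(K)\leq 1$ supplies chain maps
\begin{align*}
\fmap:\CFT(K,\ps,\mathbb{F}[\var])\to \mathbb{F}[\var]\qquad\text{and}\qquad \fmap':\mathbb{F}[\var]\to \CFT(K,\ps,\mathbb{F}[\var])
\end{align*}
with $\fmap'\circ\fmap$ chain-homotopic to $\var\cdot\mathrm{id}$. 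Passing to homology gives $\fmap'_*\fmap_*=\var$ on $\HFT(K,\ps,\mathbb{F}[\var])$. Since $\mathbb{F}[\var]$ is torsion-free, every $\var$-torsion class $y$ satisfies $\fmap_*(y)=0$, whence $\var y=\fmap'_*\fmap_*(y)=0$. Thus the $\var$-torsion in $\HFT(K,\ps,\mathbb{F}[\var])$ is annihilated by $\var$, and the structure theorem over $\mathbb{F}[\var]$ yields a bigraded decomposition
\begin{align*}
\HFT(K,\ps,\mathbb{F}[\var])\cong \mathbb{F}[\var]\langle g\rangle \oplus \bigoplus_{i}\mathbb{F}[\var]/\var\langle x_i\rangle,
\end{align*}
with $g$ at bigrading $(M,A)=(0,\tau(K))$ and each torsion generator $x_i$ at some $(M_i,A_i)$.

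I would then feed this decomposition into the Bockstein long exact sequence arising from the short exact sequence $0\to\CFT(K,\ps,\mathbb{F}[\var])\{-2,-1\}\xrightarrow{\var}\CFT(K,\ps,\mathbb{F}[\var])\to \widehat{\mathrm{CFK}}(K)\to 0$, where the bigrading shift $\{-2,-1\}$ reflects that $\var$ has bidegree $(-2,-1)$. The free summand $\mathbb{F}[\var]\langle g\rangle$ contributes only its cokernel class at bigrading $(0,\tau(K))$ to $\widehat{\mathrm{HFK}}(K)$, accounting for $t^{\tau(K)}$ in $Q_K(q,t)$. Each torsion summand $\mathbb{F}[\var]/\var\langle x_i\rangle$ contributes two classes: one at $(M_i,A_i)$ from $x_i\in\Coker(\var)$ and one at $(M_i-1,A_i-1)$ arising from $x_i\in\Ker(\var)$ via the connecting homomorphism. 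Summing the contributions,
\begin{align*}
Q_K(q,t)-t^{\tau(K)}=\sum_{i}\bigl(q^{M_i}t^{A_i}+q^{M_i-1}t^{A_i-1}\bigr)=(1+qt)\sum_{i}q^{M_i-1}t^{A_i-1},
\end{align*}
which establishes divisibility by $1+qt$.

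The main step is the first one: extracting from $\tf(K)\leq 1$ the statement that every $\var$-torsion element of $\HFT(K,\ps,\mathbb{F}[\var])$ is simple. The argument above works cleanly in the single-marking setting, and relies on the standard normalization placing the free generator $g$ at Maslov grading $0$ and Alexander grading $\tau(K)$. Once these points are verified, the Bockstein computation in the second paragraph is a routine bigraded exercise.
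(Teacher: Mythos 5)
Your argument is essentially the paper's proof repackaged through the Bockstein sequence. The paper works directly with the reduced form of $C_{K,\ps}$ — generators forming a basis of $\widehat{\mathrm{HFK}}(K)$, differential $d(\x_{2j-1})=\var^{s_{2j-1}-s_{2j}}\x_{2j}$ — and observes that $\tf(K)\leq 1$ forces each pair $(\x_{2j-1},\x_{2j})$ to sit at bidegrees differing by $(1,1)$, which is exactly the pairing you recover from the connecting homomorphism. Your route to ``all torsion is $\var$-simple,'' composing $\fmap$ and $\fmap'$ through the torsion-free module $\F[\var]$, is a tidy alternative to the paper's appeal to the normal form and to the identification of $\tf(K)$ with the maximal torsion exponent; both give the same intermediate fact. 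One correction worth flagging: the bidegree of $\var$ here is \emph{not} $(-2,-1)$. That is the convention for the $w$-tracking variable; in this paper $\var$ tracks the $z$-basepoints and $\varw$ (tracking $w$) is set to zero, so $\var$ has Maslov degree $0$ — this is precisely the content of the paper's identity $\mu_{i+1}=\mu_i-1$ for odd $i$ — and Alexander degree $+1$. Accordingly the two Bockstein contributions from a summand $\F[\var]/\var\langle x_i\rangle$ at $(M_i,A_i)$ sit at $(M_i,A_i)$ and $(M_i+1,A_i+1)$, not $(M_i-1,A_i-1)$. Since this still produces $(1+qt)\cdot q^{M_i}t^{A_i}$, the divisibility conclusion is unaffected, but the stated grading convention should be fixed.
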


Since $\CFT(K\#K')=\CFT(K)\otimes_{\F[\var,\varw]}\CFT(K')$, we  obtain the following connected sum formula:

\begin{prop}\label{prop:connected-sum}
For every two knots $K$ and $K'$ we have
\[\tf(K\#K')=\max\big\{\tf(K),\tf(K')\big\}\quad\quad\text{and}\quad\quad \torsion(K\#K')=\max\big\{\torsion(K),\torsion(K')\big\}.\]
\end{prop}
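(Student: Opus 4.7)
The plan is to leverage the connected sum formula stated immediately before the proposition. Tensoring the identity $\CFT(K\#K')\simeq\CFT(K)\otimes_{\Fbb[\var,\varw]}\CFT(K')$ first with $\A=\Fbb[\var]$ (where $\varw$ acts by zero) and then with $\A'=\Fbb[\var]$ (where $\varw$ acts by $\var$) yields chain homotopy equivalences
\[
\CFT(K\#K',\A)\ \simeq\ \CFT(K,\A)\otimes_{\A}\CFT(K',\A),\qquad
\CFT(K\#K',\A')\ \simeq\ \CFT(K,\A')\otimes_{\A'}\CFT(K',\A').
\]
Hence both identities reduce to showing that, over $\A$ (respectively $\A'$), the pseudometric from $K\#K'$ to the unknot is the maximum of the corresponding pseudometrics from $K$ and $K'$ to the unknot.

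The first step is to reinterpret $\tf(K)=\uf_{\A}(K,U)$ as the maximal $\var$-torsion order of $\HFT(K,\A)$. Since $\CFT(K,\A)$ is a bounded, finitely generated, free chain complex over the PID $\A$, the structure theorem for such complexes gives a chain homotopy equivalence
\[
\CFT(K,\A)\ \simeq\ \A\ \oplus\ \bigoplus_{i}\Bigl(\A\xrightarrow{\ \var^{n_i}\ }\A\Bigr),
\]
so $\HFT(K,\A)\cong\A\oplus\bigoplus_i\A/\var^{n_i}$. I would then verify $\tf(K)=\max_i n_i$: setting $N=\max_i n_i$, the upper bound uses that $\var^{N}$ is null-homotopic on every bar summand, so that projection to the free summand together with inclusion of $\A$ scaled by $\var^{N}$ gives a pair of chain maps whose compositions are homotopic to $\var^{N}$ in both directions; the lower bound follows by passing to homology and observing that any chain map $\CFT(K,\A)\to\A$ vanishes on torsion classes, so if $gf\simeq\var^k$ then $\var^k$ annihilates the torsion part of $\HFT(K,\A)$. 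The same argument with $\A'$ in place of $\A$ identifies $\torsion(K)$ with the maximal torsion order of $\HFT(K,\A')$.

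The second step is a Künneth computation. Because $\A$ and $\A'$ are PIDs and the factor complexes are free, Künneth gives
\[
\HFT(K\#K',\A)\ \cong\ \HFT(K,\A)\otimes_{\A}\HFT(K',\A)\ \oplus\ \mathrm{Tor}_1^{\A}\!\bigl(\HFT(K,\A),\HFT(K',\A)\bigr).
\]
Substituting the decompositions and using $\A/\var^n\otimes_\A\A/\var^m\cong\mathrm{Tor}_1^\A(\A/\var^n,\A/\var^m)\cong\A/\var^{\min(n,m)}$ shows that the torsion summands of $\HFT(K\#K',\A)$ carry orders from $\{n_i\}\cup\{m_j\}\cup\{\min(n_i,m_j)\}$, whose maximum equals $\max\{\max_i n_i,\max_j m_j\}=\max\{\tf(K),\tf(K')\}$. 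The identical calculation over $\A'$ yields the formula for $\torsion$.

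The main obstacle is the first step: the identification of $\tf(K)$ and $\torsion(K)$ with maximal torsion orders. While the bar decomposition of free complexes over a PID is classical, the delicate point is verifying that chain maps realizing $\uf_{\A}(K,U)\leq k$ induce the expected action on torsion classes. The cleanest route is to normalize both complexes to bar form first, and then argue that any chain map between them is, up to homotopy, determined by its separate actions on the free and torsion summands. Once this identification is in place, the Künneth computation in the second step is routine.
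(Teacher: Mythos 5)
Your proposal is correct and takes essentially the same approach as the paper: both reduce to the chain homotopy equivalence $\CFT(K\#K')\simeq\CFT(K)\otimes_{\F[\var,\varw]}\CFT(K')$, identify $\tf$ and $\torsion$ with the maximal $\var$-torsion orders of $\HFT(K,\A)$ and $\HFT(K,\A')$ over the PID $\F[\var]$, and conclude by the K\"unneth formula together with $\A/\var^n\otimes_\A\A/\var^m\cong\mathrm{Tor}_1^\A(\A/\var^n,\A/\var^m)\cong\A/\var^{\min(n,m)}$. Your write-up additionally spells out the verification that $\tf(K)$ and $\torsion(K)$ equal the leading torsion orders $n_1$ and $m_1$, a fact the paper records in Section~4 via the normal-form decomposition but does not re-derive.
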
 

{\bf{Acknowledgments.}} The author would like to thank Lukas Lewark for  bringing up the potential connection between torsion invariants from knot  Floer homology and rational distance of knots, and helpful discussions.

\section{Heegaard triples for rational tangle replacements}\label{sec:heegaard-triple}
Let us  assume that $K\subset \R^3$ is an oriented knot or link and the marking $\ps$ on $K$ is also fixed. We further assume that the intersection of a ball $B\subset \R^3$ with $K$ is the trivial $2$-tangle $T$ and that under the projection $\pi:\R^3\ra \R^2$  over the $xy$-plane $\R^2$  (from a fixed point in $\R^3\setminus \R^2$), the image of $K\subset \R^3$ gives a knot diagram for $K$. Moreover,  the image of $(B,K\cap B)$ is $(D,\pi(K)\cap D)$, where $D$ is a disk and $\pi(K)\cap D$ is a pair of disjoint line segments. Let $J$ denote a line segment in $D$ which connects the two line segments in $\pi(K)\cap D$. For simplicity, we assume that $\qs=\pi(\ps)$ includes at least one marked point on each line segment connecting two self-intersections of $\pi(K)$. We also fix a marked point $q'$ on $J$ and a distinguished marked point $q''\in\qs$. Let $\overline{K}$ denote the union of $\pi(K)$ with $J$, and $\Sigma$ denote the boundary of the $\epsilon$-neighborhood of $\overline{K}\subset \R^3$ for a sufficiently small value of $\epsilon>0$. $\Sigma$ is a closed surface of genus $g+1$ if $\pi(K)$  has $g-1$ self-intersections. The intersection of $\Sig$ with the plane $\R^2$ is a $1$-manifold, which is a collection of $g+2$ circles $\alpha_{-1},\alpha_0,\ldots,\alpha_{g}$ in $\R^2$. We choose the labels so that $\alpha_{-1}$ is the distinguished circle which includes all other circles in its interior (as a curve on $\R^2$).\\

\begin{figure}
\def\svgwidth{0.9\textwidth}
\begin{center}
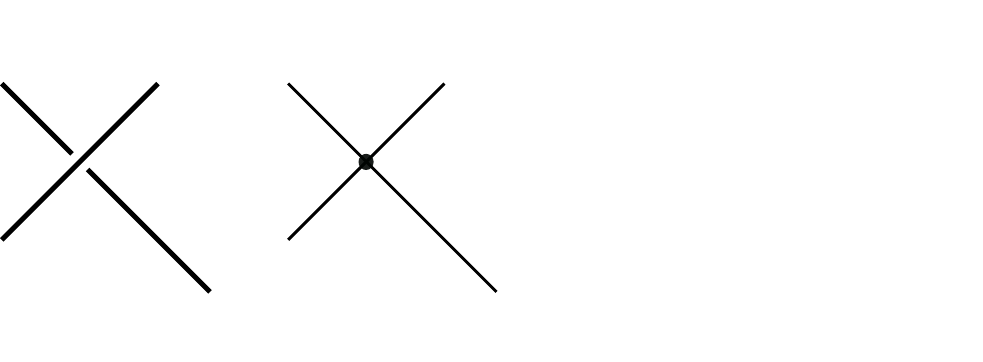
\caption{\label{fig:crossing} A crossing in a knot diagram for $K$ (left) corresponds to a self intersection $p$ of $\overline{K}$ (middle). The ball of radius $5\epsilon$ around $p$ cuts $\Sigma_0$ in a subsurface (right). Associated with $p$ is $\beta_p$ and associated with the marked point $q$ is a meridian $\beta_q$.}
\end{center}
\end{figure} 
 
Each crossing in the diagram of $K$, as illustrated in Figure~\ref{fig:crossing} (left), corresponds to a self intersection $p$ of $\overline{K}$. The intersection of the ball of radius $5\epsilon$ around $p$ with $\Sigma$ is then a sphere with $4$ disks removed, as illustrated in Figure~\ref{fig:crossing} (right). Associated with each such crossing, we may then introduce a simple closed curve $\beta_p$, which is included on the aforementioned (punctured) sphere, as illustrated in Figure~\ref{fig:crossing} (right). Each such $\beta$-curve has $4$ intersections with some $\alpha$-curves $\alpha_i,\alpha_j,\alpha_k$ and $\alpha_l$. Note that $i,j,k$ and $l$ are not necessarily different, and some of them may be equal to $-1$. Associated with the $g-1$ crossings $p_1,\ldots,p_{g-1}$ of the knot diagram for $K$, we thus obtain the $\beta$-curves $\beta_1=\beta_{p_1},\ldots,\beta_{g-1}=\beta_{p_{g-1}}$. Moreover, associated every $q\in\qs$, and also associated with $q'$, we obtain the simple closed meridians $\{\beta_q\}_{q\in\qs}$ and $\beta_0=\beta_{q'}$ on $\Sigma$  (see  Figure~\ref{fig:crossing}). For each $q\in\qs$, we place a pair of marked points $z_q$ and $w_q$ on $\Sigma$ on the two sides of $\beta_q$, so that traversing $K$ in the direction determined by its orientation determines a small arc from $w_q$ to $z_q$. Let $\alpha_q$ denote a small circle on $\Sigma$ which bounds a disk that contains $w_q$ and $z_q$. The  diagram $H_0=\big(\Sigma,\alphas,\betas,\zs,\ws\big)$ is then a Heegaard diagram representing the pointed link $(K,\ps)$, where $\zs=\big\{z_q\ \big|\ q\in\qs\big\} $, $\ws=\big\{w_q\ |\ q\in\qs\big\} $ and 
\begin{align*}
&\alphas=\big\{\alpha_0,\ldots,\alpha_g\big\}\cup\big\{\alpha_q\ |\ q\in\qs\ \text{and}\ q\neq q'\big\}\quad\text{and}\quad\betas=\big\{\beta_0,\ldots,\beta_{g-1}\big\}\cup\big\{\beta_q\ \big|\ q\in\qs\big\}.
\end{align*}

\begin{figure}
\def\svgwidth{0.7\textwidth}
\begin{center}
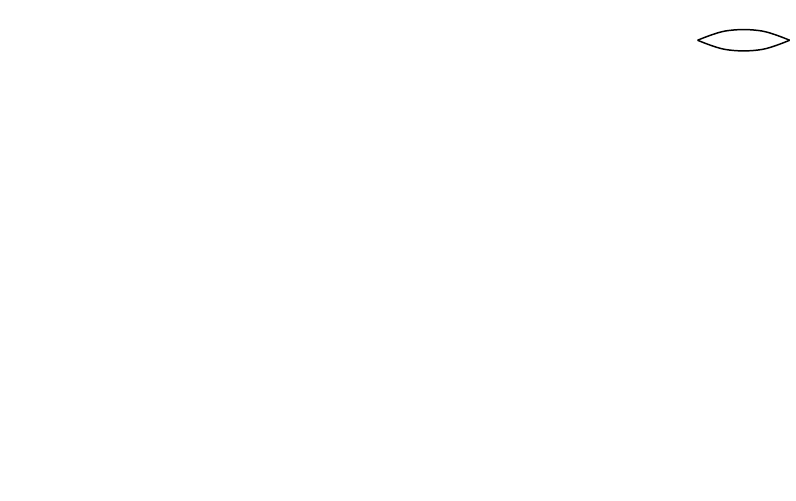
\caption{\label{fig:tangle-diagram} The intersection of $\Sigma$ with the $2\epsilon$-neighborhood of $D$ is illustrated. Changing the tangle $K\cap B$ to another rational tangle corresponds to applying a sequence of (Dehn) half twists along $\mu$ and $\beta_0$ to $\beta_0$ or $\mu_0$, which gives the simple closed curve $\gamma_0$.}
\end{center}
\end{figure}

Consider the intersection $\Sigma_0$ of $\Sigma$ with the $2\epsilon$-neighborhood of the disk $D\subset \R^2$ in $\R^3$, as  illustrated in Figure~\ref{fig:tangle-diagram} (top-left and right). Let $\mu_0$ denote a simple closed curve on $\Sigma$ which projects to (an $\epsilon$-extension) of $J$ under $\pi$. Denote the (Dehn) half twist along $\beta_0$ by $\map_0$ and the half twist along $\mu_0$ by $\map_1$. Given any sequence of integers $\seq=(a_1,b_1,\ldots,a_k,b_k)$, set
\[\map=\map_\seq=\map_0^{a_1}\circ \map_1^{b_1}\circ \map_0^{a_2}\circ \map_1^{b_2}\circ\cdots\circ \map_0^{a_k}\circ \map_1^{b_k},\quad \gamma_0=\gamma_\seq=\map(\beta_0)\quad\text{and}\quad\mu_0=\mu_\seq=\map(\mu_0).\]
For instance, the green simple closed curve in Figure~\ref{fig:tangle-diagram} illustrates $\gamma_0=\map_1^3(\beta_0)$. Let $\gamma_\bullet$ denote a Hamiltonian isotope of $\beta_\bullet$ for $\bullet=1,\ldots g-1$ or $\bullet\in\qs$ and set 
\[\gammas_\seq=\big\{\gamma_0,\ldots,\gamma_{g-1}\big\}\cup\big\{\gamma_q\ \big|\ q\in\qs\big\}.\] 
We choose a different generic Hamiltonian isotopy for each sequence $\seq$. Associated with $\map_0$ and $\map_1$ are the vertical and the horizontal half twists $\map_0'$ and $\map_1'$ which  may be applied to $(B,K\cap B)$. Correspondingly, the sequence $\seq$ also determined the diffeomorphism $\map'_\seq:B\ra B$ which preserves $K\cap \partial B$ and takes $K\cap B$ to a rational tangle $T=T_\seq\subset B$. The Heegaard diagram
\[H_{\seq}=(\Sig,\alphas,\gammas,\zs,\ws)\]
then represents the pointed link $(K_\seq=(K\setminus B)\cup T_\seq,\ps)$ obtained by replacing $K\cap B$ with the rational tangle $T_\seq$, provided that this rational replacement is orientation-preserving. If the RR is not orientation-preserving, the diagram $H'_{\seq}=(\Sig,\alphas,\gammas,\zs\cup\ws)$ represents the unoriented link $(K_\seq=(K\setminus B)\cup T_\seq,\ps)$. One should also note that every RR (for $K\cap B$) is obtained in this way or by doing the same procedure with $\gamma_0$ replaced with $\mu_0$ (see \cite{Conway-tangles} or \cite{Kauffman-tangles}). The latter case (where we use $\mu_0$ instead of $\gamma_0$) may be handled in a completely similar manner, and will not be discussed below. Associated with the Heegaard diagrams $H_\seq$ (in the case where $\seq$ corresponds to an ORR) and $H'_\seq$  we then obtain  the chain complexes 
\[(C_\seq,d_\seq)=\CFT(K_\seq,\ps)\otimes_{\F[\var,\varw]}\A\quad\text{and}\quad (C'_\seq,d'_\seq)=\CFT(H'_\seq)=\CFT(K_\seq,\ps)\otimes_{\F[\var,\varw]}\A'.\]
   The homology group $\H_\seq$ of $(C_\seq,d_\seq)$ and the homology $\H'_\seq$ of $(C'_\seq,d'_\seq)$ are then modules over $\F[\var]$. For $\seq=0=(0,0)$, we denote $C_\seq$, $\H_\seq$, $C'_\seq$ and $\H'_\seq$  by $C_{K,\ps}=C_0$, $\H_{K,\ps}=\H_0$, $C'_{K,\ps}=C'_0$, and $\H'_{K,\ps}=\H'_0$, respectively. Associated with the  RR $K_{\seq}\leadsto K_{\seqb}$, we obtain the Heegaard triple
 \[H_{\seq,\seqb}=(\Sigma,\alphas,\gammas_\seq,\gammas_{\seqb},\zs,\ws)\quad\text{and}\quad H'_{\seq,\seqb}=(\Sigma,\alphas,\gammas_\seq,\gammas_{\seqb},\zs\cup\ws).\]
In using the diagram $H_{\seq,\seqb}$, we implicitly assume that $K_\seq\leadsto K_\seqb$  is orientation-preserving.  The  Heegaard diagrams  $(\Sig,\gammas_\seq,\gammas_\seqb,\zs,\ws)$ and $(\Sig,\gammas_\seq,\gammas_\seqb,\zs\cup\ws)$  determine the chain complexes 
\[C_{\seq,\seqb}=\CFT(\Sig,\gammas_\seq,\gammas_\seqb,\zs,\ws)\otimes_{\F[\var,\varw]}\A\quad\text{and}\quad C'_{\seq,\seqb}=\CFT(\Sig,\gammas_\seq,\gammas_\seqb,\zs\cup\ws),\] 
i.e. the variable associated with $\ws$ is set equal to $0$ in $C_{\seq,\seqb}$, and is set equal to $\var$ in $C'_{\seq,\seqb}$. Again, the homology of $C_{\seq,\seqb}$ is denoted by $\H_{\seq,\seqb}$ and the homology of $C'_{\seq,\seqb}$ is denoted by $\H'_{\seq,\seqb}$. The holomorphic triangle maps give the $\F[\var]$-homomorphisms
 \begin{align*}
 &\Phi_{\seq,\seqb}:\H_\seq\otimes \H_{\seq,\seqb}\ra \H_\seqb,&&&&
 \Phi_{\seq,\seqb,\seqc}:\H_{\seq,\seqb}\otimes \H_{\seqb,\seqc}\ra \H_{\seq,\seqc},\\
 &\Phi'_{\seq,\seqb}:\H'_\seq\otimes \H'_{\seq,\seqb}\ra \H'_\seqb&&\text{and}&&
 \Phi'_{\seq,\seqb,\seqc}:\H'_{\seq,\seqb}\otimes \H'_{\seqb,\seqc}\ra \H'_{\seq,\seqc},
 \end{align*}
where the first two maps are defined only if the rational replacements $K_\seq\leadsto K_\seqb\leadsto K_\seqc$ are orientation-preserving. Moreover, in the level of homology groups we have the equalities
\begin{align*}
&\Phi_{\seqb,\seqc}(\Phi_{\seq,\seqb}(\x\otimes\x_{\seq,\seqb})\otimes\x_{\seqb,\seqc})=
\Phi_{\seq,\seqc}(\x\otimes\Phi_{\seq,\seqb,\seqc}(\x_{\seq,\seqb}\otimes\x_{\seqb,\seqc})),\quad
\quad\forall\ \x\in \H_\seq,\ \x_{\seq,\seqb}\in \H_{\seq,\seqb},\ \x_{\seqb,\seqc}\in \H_{\seqb,\seqc},\\
&\Phi'_{\seqb,\seqc}(\Phi'_{\seq,\seqb}(\x\otimes\x_{\seq,\seqb})\otimes\x_{\seqb,\seqc})=
\Phi'_{\seq,\seqc}(\x\otimes\Phi'_{\seq,\seqb,\seqc}(\x_{\seq,\seqb}\otimes\x_{\seqb,\seqc})),\quad
\quad\forall\ \x\in \H'_\seq,\ \x_{\seq,\seqb}\in \H'_{\seq,\seqb},\ \x_{\seqb,\seqc}\in \H'_{\seqb,\seqc}.
\end{align*}
 
 When $\seq=\seqb$, both $\H_{\seq,\seqb}$ and $\H'_{\seq,\seqb}$ are isomorphic to $V^{g+1}$, where $V=\F[\var]\oplus\F[\var]$ is generated by a top generator $\theta_v$ and a bottom generator $\theta'_v$ (with respect to the homological grading). This gives the unique top classes $\theta_\seq\in\H_{\seq,\seq}$ and $\theta'_{\seq}\in\H'_{\seq,\seq}$ (c.f. \cite[Section 6.2]{AE-tangles}). Moreover, 
\[\Phi_\seq=\Phi_{\seq,\seq}(\cdot\otimes\theta_\seq):\H_\seq\ra \H_\seq\quad\text{and}\quad \Phi'_\seq=\Phi'_{\seq,\seq}(\cdot\otimes\theta'_\seq):\H'_\seq\ra \H'_\seq\]

 \begin{lem}\label{lem:top-generator}
 For every $\seq\neq \seqb$ as above, there are classes  $\theta'_{\seq,\seqb}\in \H'_{\seq,\seqb}$ and $\theta'_{\seqb,\seq}\in \H'_{\seqb,\seq}$ such that  \[\Phi'_{\seq,\seqb,\seq}(\theta'_{\seq,\seqb}\otimes\theta'_{\seqb,\seq})=\var^i\cdot\theta_{\seq}\quad\text{and}\quad \Phi'_{\seqb,\seq,\seqb}(\theta'_{\seqb,\seq}\otimes\theta'_{\seq,\seqb})=\var^i\cdot\theta_{\seqb}\]
for some $i\in\{0,1\}$. Moreover, if the rational replacement $K_\seq\leadsto K_\seqb$ is  orientation-preserving, there are classes  $\theta_{\seq,\seqb}\in \H_{\seq,\seqb}$ and $\theta_{\seqb,\seq}\in \H_{\seqb,\seq}$ such that for some $i\in\{0,1\}$ 
\[\Phi_{\seq,\seqb,\seq}(\theta_{\seq,\seqb}\otimes\theta_{\seqb,\seq})=\var^i\cdot\theta_{\seq}\quad\text{and}\quad \Phi_{\seqb,\seq,\seqb}(\theta_{\seqb,\seq}\otimes\theta_{\seq,\seqb})=\var^i\cdot\theta_{\seqb}\] 
 \end{lem}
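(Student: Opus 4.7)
The plan is to reduce both identities to a local triangle count on the subsurface $\Sigma_0 \subset \Sigma$ shown in Figure~\ref{fig:tangle-diagram}, where the rational tangle sits. The tuples $\gammas_\seq$ and $\gammas_\seqb$ differ only in the curve $\gamma_0$ (which lives on $\Sigma_0$), while all the remaining curves in the two tuples are Hamiltonian isotopes of a common collection of $\beta$-curves lying outside $\Sigma_0$. Consequently each of the Heegaard triples $(\Sig,\gammas_\seq,\gammas_\seqb,\gammas_\seq)$ and $(\Sig,\gammas_\seqb,\gammas_\seq,\gammas_\seqb)$ decomposes, up to a small Hamiltonian perturbation, as a local triple on $\Sigma_0$ glued to an outer triple on $\Sigma\setminus\Sigma_0$ consisting entirely of pairwise Hamiltonian isotopic curves. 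The outer triple contributes a factor of $V^{g+|\qs|}$, on which the holomorphic triangle map sends the tensor of top generators to the top generator via the standard small-triangle count (c.f.\ \cite[Section 6.2]{AE-tangles}).

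It therefore suffices to analyze the local model. I would define $\theta'_{\seq,\seqb} := \eta'_{\seq,\seqb} \otimes \theta_v^{\otimes (g+|\qs|)}$, where $\eta'_{\seq,\seqb}$ is the homology class represented by the intersection point of highest homological grading in the local diagram $(\Sigma_0,\gamma_{0,\seq},\gamma_{0,\seqb};\zs\cup\ws)$. A direct inspection of the local picture---using that $\gamma_{0,\seq}$ and $\gamma_{0,\seqb}$ are obtained from $\beta_0$ by applying the explicit twist diffeomorphisms $\map_\seq$ and $\map_\seqb$---confirms the existence and essential uniqueness of such a top intersection point, since the local Heegaard diagram represents a rational tangle, whose Floer homology has a well-defined top generator. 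The same prescription defines $\theta'_{\seqb,\seq}$ and, in the orientation-preserving case, the non-primed classes $\theta_{\seq,\seqb}$ and $\theta_{\seqb,\seq}$, where now $\zs$ and $\ws$ play distinct roles.

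The triangle map $\Phi'_{\seq,\seqb,\seq}$ then reduces to a count of holomorphic triangles on $\Sigma_0$ with corners at $\eta'_{\seq,\seqb}$, $\eta'_{\seqb,\seq}$, and the local top generator of $\H'_{\seq,\seq}$. I expect that in the local model these three top generators cobound a unique small triangular region, whose contribution to the triangle map is $\var^i$, where $i\in\{0,1\}$ records the number of $\ws$ basepoints lying in the interior of this region (at most one, since the region is thin and localized near $\beta_0$ or $\mu_0$). The symmetric identity involving $\seqb\to\seq\to\seqb$ follows from the same small triangle with reversed orientation, which forces the same value of $i$. The main obstacle will be to rigorously rule out contributions from larger holomorphic triangles to the top class $\theta'_\seq$; this should follow from a homological grading argument, since any such additional triangle maps to a strictly lower-graded element of $V^{g+1}$ and therefore cannot contribute to the top generator. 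The orientation-preserving statement for $\Phi_{\seq,\seqb,\seq}$ is handled by the same argument, with the $\ws$-basepoints now tracked separately and the outer triple still contributing the identity on top generators.
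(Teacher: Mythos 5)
Your overall strategy — localize the triangle count to a small piece of the Heegaard surface where the two sets of $\gamma$-curves differ, and glue back a tensor factor of $V$'s — is the same high-level plan as the paper, which reduces Lemma~\ref{lem:top-generator} to Lemma~\ref{lem:top-generator-2} on a genus-zero model. However, there are two substantive gaps in how you execute the reduction and the local computation.

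First, the decomposition into a local triple tensored with $V^{g+|\qs|}$ is not something you get for free ``up to a small Hamiltonian perturbation.'' In the paper this identification requires choosing the almost complex structure appropriately (citing the degeneration argument of \cite[Proposition 5.1]{AE-cobordisms}), and the local model is not the subsurface $\Sigma_0$ but a genus-zero surface $S$ obtained by \emph{cutting $\Sigma$ along all curves in $\betas\setminus\{\beta_0\}$ and capping with disks}. Working with $\Sigma_0$ directly does not obviously give a closed local diagram in which the $\beta$-curve data decouples, so you need to say more about why the outer factor contributes the top generator to the top generator.

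Second, and more seriously, the claim that ``these three top generators cobound a unique small triangular region'' and that you can ``rule out contributions from larger holomorphic triangles ... by a homological grading argument'' does not hold up. The triangle map in link Floer homology is grading-homogeneous, so a grading argument cannot rule out additional triangles landing on the same top-graded generator — it only constrains which generator they land on, not how many triangles hit it. In the paper's genus-zero model $H_{n,\ell}$, the curves $\gamma_\seq$ and $\gamma_\seqb$ meet in $2n$ points and there are \emph{many} contributing triangles, not one small one. The paper handles this by passing to the auxiliary two-basepoint diagram $(S,\beta_0,\gamma_0,z_2,w_1)$, where $\beta_0$ and $\delta_0$ become isotopic, explicitly computing the chain complexes $E$, $F$ to identify a closed top generator (the existence of which is not automatic and must be verified from the differential), and then observing that every contributing triangle $\phi$ satisfies $n_{\zs_0}(\phi)+n_{\ws_0}(\phi)=1$, so the full count is $\var\cdot\Psi(\x_1\otimes\y_1)=\var\cdot\thetabar_0$. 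Your sketch has no substitute for this step, and your definition of $\theta'_{\seq,\seqb}$ via ``the intersection point of highest homological grading'' silently assumes it is a cycle, which also needs proof.
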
 

Lemma~\ref{lem:top-generator}, which is proved in the following section, implies the following theorem.

\begin{thm}\label{thm:RTR}
If the marked link $(K',\ps')$ is obtained from the marked link $(K,\ps)$ by a single rational replacement away from the markings, there are $\F[\var]$-homomorphisms 
\[\phi:\H'_{K,\ps}\ra \H'_{K',\ps'}\quad\text{and}\quad \phi': \H'_{K',\ps'}\ra \H'_{K,\ps}\quad\text{with}\quad \phi\circ \phi'=\var\cdot Id_{\H'_{K',\ps'}}\quad\text{and}\quad
\phi'\circ \phi=\var\cdot Id_{\H'_{K,\ps}}.\]
If the links are oriented and the replacement is an ORR, there are also $\F[\var]$-homomorphisms 
\[\psi:\H_{K,\ps}\ra \H_{K',\ps'}\quad\text{and}\quad \psi': \H_{K',\ps'}\ra \H_{K,\ps}\quad\text{with}\quad \psi\circ \psi'=\var\cdot Id_{\H_{K',\ps'}}\quad\text{and}\quad
\psi'\circ \psi=\var\cdot Id_{\H_{K,\ps}}.\]
\end{thm}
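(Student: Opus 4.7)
The plan is to construct the maps by choosing an intermediate Heegaard-diagram presentation in which the rational replacement becomes a transition of $\gamma$-curves, and then to feed the triangle maps of Section~\ref{sec:heegaard-triple} with the ``top'' generators produced by Lemma~\ref{lem:top-generator}. First I would invoke the construction of Section~\ref{sec:heegaard-triple} to pick sequences $\seq$ and $\seqb$ so that $H_\seq'$ represents $(K,\ps)$ and $H_\seqb'$ represents $(K',\ps')$; because the replacement is away from $\ps$, one may arrange $\ps=\ps'$, so $\H'_{K,\ps}=\H'_\seq$ and $\H'_{K',\ps'}=\H'_\seqb$. Lemma~\ref{lem:top-generator} then supplies classes $\theta'_{\seq,\seqb}\in\H'_{\seq,\seqb}$ and $\theta'_{\seqb,\seq}\in\H'_{\seqb,\seq}$ satisfying
\[\Phi'_{\seq,\seqb,\seq}(\theta'_{\seq,\seqb}\otimes\theta'_{\seqb,\seq})=\var^i\theta'_\seq,\qquad \Phi'_{\seqb,\seq,\seqb}(\theta'_{\seqb,\seq}\otimes\theta'_{\seq,\seqb})=\var^i\theta'_\seqb\]
for some $i\in\{0,1\}$.

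Next I would define the candidate maps as triangle maps with these classes in the second slot:
\[\widetilde\phi(\x):=\Phi'_{\seq,\seqb}(\x\otimes\theta'_{\seq,\seqb}),\qquad \widetilde\phi'(\y):=\Phi'_{\seqb,\seq}(\y\otimes\theta'_{\seqb,\seq}).\]
These are $\F[\var]$-homomorphisms by construction. Applying the associativity of the triangle maps stated right before Lemma~\ref{lem:top-generator},
\[\widetilde\phi'\circ\widetilde\phi(\x)=\Phi'_{\seqb,\seq}\bigl(\Phi'_{\seq,\seqb}(\x\otimes\theta'_{\seq,\seqb})\otimes\theta'_{\seqb,\seq}\bigr)=\Phi'_{\seq,\seq}\bigl(\x\otimes\Phi'_{\seq,\seqb,\seq}(\theta'_{\seq,\seqb}\otimes\theta'_{\seqb,\seq})\bigr)=\var^i\,\Phi'_\seq(\x),\]
and symmetrically $\widetilde\phi\circ\widetilde\phi'=\var^i\,\Phi'_\seqb$. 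At this point I would appeal to the standard fact that $\Phi'_\seq=\Phi'_{\seq,\seq}(\cdot\otimes\theta'_\seq)$ and $\Phi'_\seqb$ act as the identity on homology: the two copies of $\gammas_\seq$ used in $\H'_{\seq,\seq}$ differ by a small Hamiltonian isotopy, and the count of small triangles based at $\theta'_\seq$ reproduces the continuation map, which descends to the identity on $\H'_\seq$. Consequently $\widetilde\phi'\circ\widetilde\phi=\var^i\,\mathrm{Id}_{\H'_\seq}$ and $\widetilde\phi\circ\widetilde\phi'=\var^i\,\mathrm{Id}_{\H'_\seqb}$.

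Finally I would remove the ambiguity in $i$. If $i=1$, set $\phi=\widetilde\phi$ and $\phi'=\widetilde\phi'$ and we are done. If $i=0$, then $\widetilde\phi$ and $\widetilde\phi'$ are already mutually inverse isomorphisms of $\F[\var]$-modules, so setting $\phi:=\var\cdot\widetilde\phi$ and $\phi':=\widetilde\phi'$ gives $\phi\circ\phi'=\var\cdot\mathrm{Id}$ and $\phi'\circ\phi=\var\cdot\mathrm{Id}$, as desired. The oriented (ORR) case is carried out identically: one uses the unprimed portion of Lemma~\ref{lem:top-generator} and the unprimed triangle maps $\Phi_{\seq,\seqb}$, $\Phi_{\seqb,\seq}$, $\Phi_{\seq,\seqb,\seq}$ etc., obtaining $\psi,\psi'$ in exactly the same way.

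The step I expect to carry the most weight is the identification $\Phi'_\seq=\mathrm{Id}$ (and $\Phi_\seq=\mathrm{Id}$) on homology, since it is what pins down the ambiguous factor $\var^i$ and relies on choosing the small Hamiltonian representatives of the $\gamma$-curves correctly so that the top class $\theta'_\seq$ coincides with the one produced by the nearest-point triangle count; once this is in hand, the rest is a formal application of Lemma~\ref{lem:top-generator} and the associativity diamond.
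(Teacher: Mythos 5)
Your proposal is correct and matches the paper's intended argument: the paper gives no separate proof of Theorem~\ref{thm:RTR}, asserting only that Lemma~\ref{lem:top-generator} implies it, and the deduction you supply via the triangle maps $\Phi'_{\seq,\seqb}(\cdot\otimes\theta'_{\seq,\seqb})$, the associativity identity stated just above the lemma, and the standard fact that $\Phi'_\seq$ (nearest-point triangle count for a small Hamiltonian isotope) is the identity on homology is exactly the right one. You also correctly handle the ambiguity $i\in\{0,1\}$ in the lemma by rescaling one map by $\var$ when $i=0$, which is needed to land on the theorem's normalization $\phi\circ\phi'=\var\cdot\mathrm{Id}$.
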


\section{Top generators in some special Heegaard diagrams}\label{sec:top-generators}
This section is devoted to the proof of Lemma~\ref{lem:top-generator}. Fix a Heegaard triple $(\Sig,\gammas_\seq,\gammas_{\seqb},\gammas_{\seqc},\zs,\ws)$. Later, we will further assume that $\seqc=\seq$ (i.e. $\gammas_\seqc=\gammas_\seq'$ is just a small Hamiltonian isotope of $\gammas_\seq$). Let $S$ denote the sphere component containing $\beta_0$ in the surface obtained by cutting $\Sigma$ along the curves in $\betas\setminus\{\beta_{0}\}$ and gluing disks to the resulting boundary components.  Four of the marked points, two from $\zs$ and two from $\ws$, are in $S$. We may label these marked points $\zs_0=\{z_1,z_2\}\subset\zs$ and $\ws_0=\{w_1,w_2\}\subset\ws$. The diagram $(S,\beta_0,\mu_0,\zs_0,\ws_0)$ is illustrated in Figure~\ref{fig:tangle-diagram} (bottom-left). Then $(S,\gamma_\seq,\gamma_\seqb,\gamma_\seqc,\zs_0\cup\ws_0)$  gives the chain complexes $E'_{\seq,\seqb},E'_{\seqb,\seqc}$ and $E'_{\seq,\seqc}$ with coefficients in $\A'$, as well as the triangle map
\[\Psi'_{\seq,\seqb,\seqc}:E'_{\seq,\seqb}\otimes E'_{\seqb,\seqc}\ra E'_{\seq,\seqc}.\]
If the RRs $K_\seq\leadsto K_\seqb\leadsto K_\seqc$ are orientation-preserving, the diagram $(S,\gamma_\seq,\gamma_\seqb,\gamma_\seqc,\zs_0,\ws_0)$ determines the chain complexes 
$E_{\seq,\seqb}$, $E_{\seqb,\seqc}$ and $E_{\seq,\seqc}$ with coefficients in $\A$, and the maps
\[\Psi_{\seq,\seqb,\seqc}:E_{\seq,\seqb}\otimes E_{\seqb,\seqc}\ra E_{\seq,\seqc}.\] 
Moreover, by choosing the almost complex structure  appropriately (see the argument of \cite[Proposition 5.1]{AE-cobordisms}), we may assume that 
\[C_{\star,\bullet}=E_{\star,\bullet}\otimes_{\F[\var]}V^{g+|\ps|-1}\quad\text{and}\quad C'_{\star,\bullet}=E'_{\star,\bullet}\otimes_{\F[\var]}V^{g+|\ps|-1}\quad\text{for}\ (\star,\bullet)\in\big\{(\seq,\seqb),(\seqb,\seqc),(\seq,\seqc)\big\},\]
while under these identifications $\Phi_{\seq,\seqb,\seqc}=\Psi_{\seq,\seqb,\seqc}\otimes Id$ and $\Phi'_{\seq,\seqb,\seqc}=\Psi'_{\seq,\seqb,\seqc}\otimes Id$. The proof of Lemma~\ref{lem:top-generator} is thus reduced to the following lemma, about diagrams on a sphere.

 \begin{lem}\label{lem:top-generator-2}
For $\seq,\seqb$ as above, there are closed classes  $\thetabarp_{\seq,\seqb}\in E'_{\seq,\seqb}$ and $\thetabarp_{\seqb,\seq}\in E'_{\seqb,\seq}$ such that  
\[\Psi'_{\seq,\seqb,\seq}(\thetabarp_{\seq,\seqb}\otimes\thetabarp_{\seqb,\seq})=\var^i\cdot\thetabar_{\seq},\quad\quad\text{for some } i\in\{0,1\},\] 
where $\thetabar_\seq$ denotes the top generator for $(S,\gamma_\seq,\gamma'_\seq,\zs_0\cup\ws_0)$.  Moreover, if $K_\seq\leadsto K_\seqb$ is orientation-preserving, there are closed classes  $\thetabar_{\seq,\seqb}\in E_{\seq,\seqb}$ and $\thetabar_{\seqb,\seq}\in E_{\seqb,\seq}$ such that  
\[\Psi_{\seq,\seqb,\seq}(\thetabar_{\seq,\seqb}\otimes\thetabar_{\seqb,\seq})=\var^i\cdot\thetabar_{\seq},\quad\quad\text{for some } i\in\{0,1\}.\] 
 \end{lem}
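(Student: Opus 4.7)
My plan is to reduce Lemma~\ref{lem:top-generator-2} to an explicit verification on the four-punctured sphere $S$, and then to propagate it to arbitrary $\seq,\seqb$ via the $A_\infty$-associativity of the triangle and higher-polygon maps of Heegaard Floer theory. In the \textbf{base case}, $\seqb$ is obtained from $\seq$ by a single elementary half-twist ($\map_0$ or $\map_1$), so $\gamma_\seq$ and $\gamma_\seqb$ are \emph{Farey-adjacent} simple closed curves on $S$ and meet transversely in exactly two points. The chain complex $E'_{\seq,\seqb}$ is then a free $\F[\var]$-module of rank two whose unique top-grading generator I would take as $\thetabarp_{\seq,\seqb}$; similarly for $\thetabarp_{\seqb,\seq}$. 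Taking $\gamma'_\seq$ to be a small Hamiltonian isotope of $\gamma_\seq$, the triple $(S,\gamma_\seq,\gamma_\seqb,\gamma'_\seq,\zs_0\cup\ws_0)$ is simple enough that one can enumerate the holomorphic triangles with corners at the chosen top generators by inspection: a single small embedded triangular domain dominates, and its interior encloses at most one of the four marked points in $\zs_0\cup\ws_0$, which gives $\Psi'_{\seq,\seqb,\seq}(\thetabarp_{\seq,\seqb}\otimes\thetabarp_{\seqb,\seq}) = \var^i \cdot \overline{\theta}_\seq$ with $i \in \{0,1\}$.

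For general $\seq,\seqb$, I would connect them by a geodesic path $\seq = \seqc_0, \seqc_1, \ldots, \seqc_n = \seqb$ in the Farey tessellation of consecutive adjacent slopes, and define $\thetabarp_{\seq,\seqb}$ as the iterated higher-polygon image of the base-case top classes along the path, with $\thetabarp_{\seqb,\seq}$ defined analogously via the reverse path. Repeated application of the $A_\infty$ associativity relations permits rewriting $\Psi'_{\seq,\seqb,\seq}(\thetabarp_{\seq,\seqb}\otimes\thetabarp_{\seqb,\seq})$ as a telescoping product of base-case contributions along the round trip from $\seq$ to $\seqb$ and back, which ultimately collapses to $\var^i \cdot \overline{\theta}_\seq$. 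The orientation-preserving statement over $\A$ follows by the parallel argument, in which the marked points $\ws_0$ do not contribute.

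The \textbf{main obstacle} will be controlling the $\var$-exponent $i$, since the number of base-case steps $n$ can be arbitrarily large and a naive bound would give $\var^{O(n)}$. I would address this with a Maslov grading argument: each top class $\thetabarp_{\seqc_j,\seqc_{j+1}}$ sits in a definite homological grading, and the triangle map has a prescribed degree shift, so $\Psi'_{\seq,\seqb,\seq}(\thetabarp_{\seq,\seqb}\otimes\thetabarp_{\seqb,\seq})$ lies in a single Maslov grading of $E'_{\seq,\seq}$. Since $E'_{\seq,\seq} \cong V$ has only one generator (up to $\F[\var]$-multiples and scaling) in any fixed grading, and $\overline{\theta}_\seq$ is the top generator while $\var$ drops the grading by two, the value of $i$ is uniquely determined by this grading comparison. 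A direct inspection of the base case then confirms the shifts compose to a value in $\{0,1\}$, which can also be seen geometrically by tracking the multiplicity of the composite triangular domain on the sphere at $\zs_0\cup\ws_0$: cancellations between forward and backward segments of the path force the net multiplicity to be $0$ or $1$ regardless of $n$.
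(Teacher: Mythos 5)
Your approach is genuinely different from the paper's, and it has a gap at precisely the point you flag as the main obstacle. The paper proves Lemma~\ref{lem:top-generator-2} by writing down an explicit normal form $H_{n,\ell}$ for the diagram $(S,\beta_0,\gamma_0,\zs_0,\ws_0)$ on the sphere: two disks $R_1,R_2$ glued along their boundary with a shift by $\ell$, with $\gamma_0$ made out of $n$ nested arcs in each disk. The crucial observation there is that the differential $\partial'_{0,\seqb}$ on the $2n$ generators is $\var$ times the differential of the auxiliary complex $E$ obtained by forgetting the marked points $z_1,w_2$ (after which $\gamma_0$ becomes isotopic to $\beta_0$), so the generator $\x_1$ in top grading is a cycle; and that every holomorphic triangle contributing to $\Psi'_{0,\seqb,0}(\x_1\otimes\y_1)$ covers exactly one of $\{z_1,w_2\}$, so the output is $\var\cdot\thetabar_0$ on the nose. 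No associativity or higher polygons are invoked in the proof of this lemma.

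The gap in your plan is the control of the $\var$-exponent, and your proposed fix does not close it. If you define $\thetabarp_{\seq,\seqb}$ as an iterated polygon image of base-case top classes along a Farey geodesic $\seqc_0,\dots,\seqc_n$, then by the very associativity relations you invoke, the round-trip collapses to $\var^{\sum_j i_j}\cdot\thetabar_\seq$ where each $i_j\in\{0,1\}$ comes from one base step; nothing forces the sum to stay in $\{0,1\}$, and in fact the paper's base case has $i=1$, so $n$ steps would generically give $\var^n$. The Maslov grading argument cannot rescue this: the grading of your iterated class is determined by the construction, and if the iterated class happens to equal $\var^k$ times the genuine top generator of $E'_{\seq,\seqb}$ (which you have not ruled out), then the grading comparison will faithfully report $i=2k+1$ or similar, not $i\in\{0,1\}$. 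Moreover the grading argument is blind to the possibility that the element is zero outright. The final appeal to ``cancellations between forward and backward segments of the path'' forcing net multiplicity $0$ or $1$ is the actual content of the lemma, and is left unsubstantiated; the paper handles exactly this point by the normal-form computation showing $n_{\zs_0}(\phi)+n_{\ws_0}(\phi)=1$ for all contributing triangles. To make your route work you would need to either (a) prove the iterated polygon image equals the top-grading cycle on the nose (not just up to $\var$-powers), or (b) abandon the telescoping and directly analyze the triangle count in $(S,\gamma_\seq,\gamma_\seqb,\gamma'_\seq)$ for non-adjacent slopes, which essentially re-derives the paper's model.
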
 
 \begin{proof}
We prove the lemma in the case where $\seq=0$ (the general case is proved similarly).  We set $\beta_0=\gamma_\seq$, and note that $\gamma_0=\gamma_\seqb$ is obtained by applying $\map_\seqb$ to $\beta_0$, while $\delta_0=\gamma_\seqc$ is a small Hamiltonian isotope of $\beta_0$. The Heegaard diagram $(S,\beta_0,\gamma_0,\delta_0,\zs_0\cup\ws_0)$ is of the form described below. Let $R_1=\Dbb$ denote the unit disk in the complex plane and for $k\in\Z$, let  $x_k=\exp(\pi i k/n)\in\R_1$  denote $2n$ points on the boundary of $R_1$ for some integer $n$ (note that $x_{k+2n}=x_k$). For $k=1,\ldots,n$, connect $x_{k}$ to $x_{2n+1-k}$ using the path 
 \[\epsilon_k=\Big\{re^{\frac{\pi i k}{n}}\ \big|\ 0\leq 1-r\leq \frac{k}{2n}\Big\}\cup\Big\{\Big(1-\frac{k}{2n}\Big)e^{\frac{\pi i t}{n}}\ \big|\ t\in[1-k,k]\Big\}\cup\Big\{re^{\frac{\pi i(1-k)}{n}}\ \big|\ 0\leq 1-r\leq \frac{k}{2n}\Big\}.\]
 Place a marked point $w_1$ at $0$ (i.e. center of  $R_1$) and a marked point $z_1$ at $r\exp(\pi i/(2n))$ for some $0<r<1$ which is sufficiently close to $1$. The disk $R_1$ then contains two marked points, and $n$ arcs $\epsilon_1,\ldots,\epsilon_n$ with legs on its  boundary. Let  $R_2$ denote another copy of $R_1$ with reverse orientation, and with $x_k,\epsilon_k,z_1$ and $w_1$  renamed $y_k,\delta_k,w_2$ and  $z_2$, respectively.  Finally, choose an integer $\ell$ and identify the boundaries of $R_1$ and $R_2$ so that $y_{k}$ glues to $x_{\ell+k}$.  Let $\beta_0'$ denote the closed curve which is the common boundary of $R_1$ and $R_2$. If $\ell$ and $n$ are relatively prime, it follows that the union of $\epsilon_1,\ldots,\epsilon_n,\delta_1,\ldots,\delta_n$ is a simple closed curve $\gamma_0'$.  Every Heegaard diagram $(S,\beta_0,\gamma_0,\zs_0,\ws_0)$ is equivalent (isotopic) to one of
 \[H_{n,\ell}=\big(S'=R_1\cup R_2,\beta_0',\gamma_0',\zs_0,\ws_0\big).\]
  for some relatively prime integers $(\ell,n)$, unless $\gamma_0$ is isotopic to $\beta_0$ in $S-\zs_0\cup\ws_0$. We abuse the notation and make this identification. The diagram $H_{7,3}$ is illustrated in Figure~\ref{fig:torus-diagram-3}. Denote the chain complexes associated with $(S,\beta_0,\gamma_0,\zs_0,\ws_0)$ and the algebras $\A$ and $\A'$ by $(E_{0,\seqb},\partial_{0,\seqb})$ and $(E'_{0,\seqb},\partial'_{0,\seqb})$ respectively, while the complexes associated with $(S,\gamma_0,\delta_0,\zs_0,\ws_0)$ and the algebras $\A$ and $\A'$ are denoted by $(E_{\seqb,0},\partial_{\seqb,0})$ and $(E'_{\seqb,0},\partial'_{\seqb,0})$, respectively. In order for $E_{0,\seq}$ and $E_{\seq,0}$ to be chain complexes, it is necessary that $w_1$ and $w_2$ are separated by $\gamma_0$, which is the case if and only if $K=K_\seq\leadsto K_\seqb$ is orientation-preserving.\\

\begin{figure}
\def\svgwidth{0.85\textwidth}
{\small{
\begin{center}
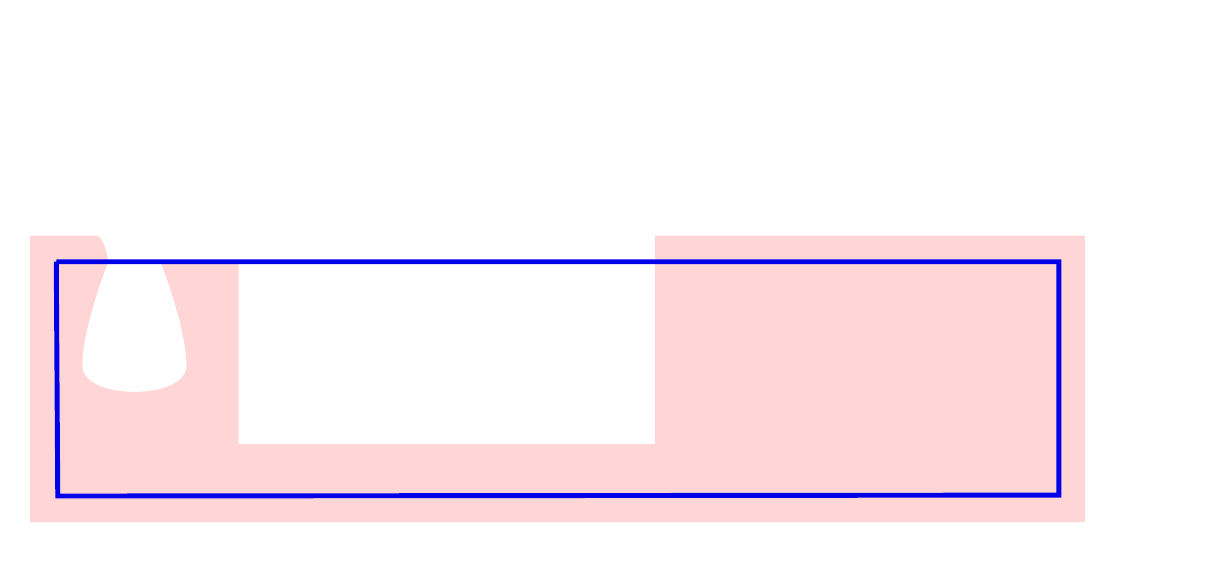 
\end{center}}}
\caption{\label{fig:torus-diagram-3} The Heegaard diagram $H_{7,4}$ is illustrated. The domain of a holomorphic triangle in $\pi_2(x_8,x'_1,\theta_0)$ is shaded.}
\end{figure} 

%%%%%%%%%%%%%%%%%%%%%%%%%%%%%%%%%%%%%%%%%%%%
%%%%%%%%%%%%%%%%%%%%%%%%%%%%%%%%%%%%%%%%%%%%
%%%%%%%%%%%%%%%%%%%%%%%%%%%%%%%%%%%%%%%%%%%%
%%%%%%%%%%%%%%%%%%%%%%%%%%%%%%%%%%%%%%%%%%%%
%%%%%%%%%%%%%%%%%%%%%%%%%%%%%%%%%%%%%%%%%%%%
%%%%%%%%%%%%%%%%%%%%%%%%%%%%%%%%%%%%%%%%%%%%

The domain of every Whitney disk which contributes to $\partial'_{\seqb,0}$ or $\partial'_{0,\seqb}$ is the union of one of the bigons containing $z_1,z_2,w_1$ or $w_2$, with some of the rectangles in the diagram.  Denote the chain complexes associated with $(S,\beta_0,\gamma_0,z_2,w_1)$ and $(S,\gamma_0,\delta_0,z_2,w_1)$ by 
\[(E,d_E)=\CFT(S,\beta_0,\gamma_0,z_2,w_1)\otimes_{\F[\var,\varw]}\A''\quad\text{and}\quad (F,d_F)=\CFT(S,\gamma_0,\delta_0,z_2,w_1)\otimes_{\F[\var,\varw]}\A'',\]
respectively, where $\A''=\F$ and the action of both $\var$ and $\varw$ on $\A''$ is multiplication by $1$. The chain complexes $E$ and $E'_{0,\seqb}$ have the same set of generators. However, the above observation implies that for every such generator $\x$ we have $\partial'_{0,\seq}(\x)=\var\cdot d_E(\x)$. Removing the marked points $z_1$ and $w_2$ allows us to change  $(S,\beta_0,\gamma_0,z_2,w_1)$  to  $(S,\beta_0,\delta_0,z_2,w_1)$ by isotopy. Therefore,  $E=\langle\x_1,\ldots,\x_{2n}\rangle_{\A''}$ for some generators $\x_i$, with  $d_E(\x_i)=0$ if $i$ is even or $i=1$, and $d_E(\x_{2i-1})=\x_{2i}$ for $i=2,\ldots,n$. Moreover, the homological degree of $\x_1$ is one more that the homological degree of $\x_2$. It thus follows that 
$\x_1,\ldots,\x_{2n}$ generate $E'_{0,\seqb}$ over $\A'$ and that
\begin{align*}
\partial'_{0,\seq}(\x_i)=\begin{cases}0&\text{if}\ i\ \text{is even or }i=1\\
\var\cdot\x_{i+1}&\text{otherwise}
\end{cases}.
\end{align*}    
We then set $\thetabarp_{0,\seqb}=\x_1$. Similarly,  we may assume that $F=\langle\y_1,\ldots,\y_{2n}\rangle_{\A''}$,  $d_F(\y_i)=0$ if $i$ is even or $i=1$, $d_F(\y_{2i-1})=\y_{2i}$ for $i=2,\ldots,n$, and  the homological degree of $\y_1$ is one more that the homological degree of $\y_2$. Then $\y_1,\ldots,\y_{2n}$ generate $E'_{\seqb,0}$ over $\A'$ and $\partial'_{\seq,0}(\y_i)=0$ unless $i>1$ is odd when we have $\partial'_{\seq,0}(\y_i)=\var\cdot \y_{i+1}$. Again, we set $\thetabarp_{\seqb,0}=\y_1$. To complete the proof of the first part of lemma, we need to show that 
$\Psi'_{0,\seqb,0}(\x_1\otimes\y_1)=\var\cdot \thetabar_0$ and $\Psi'_{\seqb,0,\seqb}(\y_1\otimes\x_1)=\var\cdot \thetabar_\seqb$. We only prove the first statement as the proofs are similar. \\

Every  holomorphic triangle which contributes to $\Psi'_{0,\seqb,0}(\x_1\otimes\y_1)$ is in correspondence with a holomorphic triangle which contributes to $\Psi(\x_1\otimes\y_1)$, where 
\[\Psi:E\otimes F\ra {\CFT}(S,\beta_0,\delta_0,z_2,w_1)\otimes_{\F[\var,\varw]}\A''\]
is the map associated with the diagram $(S,\beta_0,\gamma_0,\delta_0,z_2,w_1)$. Note that $\Psi(\x_1\otimes\y_1)=\thetabar_0$. The domain $\Dcal(\phi)$ of every holomorphic triangle $\phi$ which contributes to $\Psi_{0,\seqb,0}(\x_1\otimes\y_1)$ is of the form illustrated in Figure~\ref{fig:torus-diagram-3}, in the following sense. The  illustrated domain  belongs to $\pi_2(x_8,x_1',\theta_0)$, where $x_i'\in \delta_0\cap\gamma_0$ corresponds to $x_i\in\beta_0\cap\gamma_0$, and is in correspondence with the domain of a Whitney disk in $\pi_2(x_8,x_1)$ which contributes to $d_E(x_3)$. More generally, for every contributing $\phi\in\pi_2(x_i,x_j',\thetabar_0)$,  $\Dcal(\phi)$  is obtained from $\Dcal(\phi')$ for some  $\phi'\pi_2(x_i,x_j)$ which contributes to $d_E$, by adding/removing some of the small domains bounded between the curves $\beta_0$ and $\delta_0$. As such, $n_{\zs_0}(\phi)+n_{\ws_0}(\phi)=1$. Therefore, 
\[\Psi'_{0,\seqb,0}(\x_1\otimes\y_1)=\var\cdot \Psi(\x_1\otimes\y_1)=\var\cdot \thetabar_0.\]

Let us now assume that the rational replacement $K_0=K_\seq\leadsto K_\seqb$ is  orientation-preserving.  In this case, either of the three curves $\beta_0$, $\gamma_0$ and $\delta_0$ separates $z_1$ from $z_2$ and separates $w_1$ from $w_2$.  Therefore, $(S,\beta_0,\gamma_0,w_1,w_2)$ and  $(S,\gamma_0,\delta_0,w_1,w_2)$ are both admissible Heegaard diagrams for the same sutured manifold, which is also determined by $(S,\beta_0,\delta_0,w_1,w_2)$.  This time we let $(E,d_E)$ denote the chain complex $\widehat{\CFT}(S,\beta_0,\gamma_0,w_1,w_2)$ and $(F,d_F)$ denote the chain complex $\widehat{\CFT}(S,\gamma_0,\delta_0,w_1,w_2)$. The chain complexes $E$ and $E_{0,\seqb}$ have the same set of generators and for every generator $\x$ of $E_{0,\seqb}$, we have $\partial_{0,\seq}(\x)=\var\cdot d_E(\x)$. As discussed earlier, this implies that $E_{0,\seqb}$ includes a top generator $\thetabar_{0,\seqb}$. Similarly, $E_{\seqb,0}$ includes a top generator $\thetabar_{\seqb,0}$. Moreover, if $\Psi$ denotes the triangle map associated with the punctured Heegaard triple $(S,\beta_0,\gamma_0,\delta_0,w_1,w_2)$, it follows  that 
\[\Psi_{0,\seqb,0}(\thetabar_{0,\seqb}\otimes\thetabar_{\seqb,0})=\var\cdot\Psi (\thetabar_{0,\seqb}\otimes\thetabar_{\seqb,0})=\var\cdot \thetabar_0.\] 
 This completes the proof of the lemma.
\end{proof}
\section{The torsion invariants and their basic properties}\label{sec:properties}
Let us assume that $K$ is an oriented link and that $\ps$ is a marking of $K$. Let $H=(\Sig,\alphas,\betas,\zs,\ws)$ denote a corresponding Heegaard diagram. Let $\var$ act on $\F$ by multiplication by $1$, giving $\F$ the structure of a $\F[\var]$-module. Then $C_{K,\ps}\otimes_{\F[\var]}\F$ is  identified with $\widehat{\CFT}(\Sig,\alphas,\betas,\ws)$ and $C'_K\otimes_{\F[\var]}\F$ is  identified with $\CFT^\infty(\Sig,\alphas,\betas,\ws)\otimes_{F[\var]}\F$. In particular, 
\begin{align*}
&(\F\oplus\F)^{|\ps|-1}=H_*(\widehat{\CFT}(\Sig,\alphas,\betas,\ws))=H_*(C_{K,\ps}\otimes_{\F[\var]}\F)=\H_{K,\ps}\otimes_{\F[\var]}\F\quad\quad\text{and}\\
&(\F\oplus\F)^{|\ps|-1}=H_*(\CFT^\infty(\Sig,\alphas,\betas,\ws)\otimes_{\F[\var]}\F)=H_*(C'_{K,\ps}\otimes_{\F[\var]}\F)=\H'_{K,\ps}\otimes_{\F[\var]}\F.
\end{align*}
Let $|K|$ denote the number of connected components of $K$. For some sequences of positive integers $\ns_K=(\tf(K)=n_1\geq  \cdots\geq n_k>0)$ and $\ms_K=(\torsion(K)=m_1\geq\cdots\geq m_{k'}>0)$ we then have
\begin{align*}
&\H_{K,\ps}=\Big(\left(\F[\var]\oplus\F[\var]\right)^{|K|-1}\oplus \bigoplus_{i=1}^k\frac{\F[\var]}{\langle \var^{n_i}\rangle}\Big)\otimes\left(\F[\var]\oplus\F[\var]\right)^{|\ps|-|K|}\quad\text{and}\\ 
&\H'_{K,\ps}=\Big(\left(\F[\var]\oplus\F[\var]\right)^{|K|-1}\oplus \bigoplus_{i=1}^{k'}\frac{\F[\var]}{\langle \var^{m_i}\rangle}\Big)\otimes\left(\F[\var]\oplus\F[\var]\right)^{|\ps|-|K|}.
\end{align*}
In other words, the sequences $\ns_K$ and $\ms_K$ (together with $|\ps|$ and $|K|$) determine $\H_{K,\ps}$ and $\H'_{K,\ps}$ respectively. In fact, since $\F$ is a field and $\F[\var]$ is a PID, the chain homotopy types of $(C_{K,\ps},d_{K,\ps})$ and $(C'_{K,\ps},d'_{K,\ps})$ are determined by $\ns_K$ and $\ms_K$, respectively. Moreover, note that the sequences $\ns_K$ and $\ms_K$ do not depend on $\ps$, and that $\ns_K$ is even independent of the orientation of $K$. One should of course note that we are dropping the homological grading from our discussion to simplify the discussions. \\  

For $\ns=(n_1\geq \cdots \geq n_k>0)$, set $|\ns|=k$ and $\ns-1=(n_1-1\geq \cdots \geq n_l-1)$, where $l$ is the largest index so that $n_l>1$. Define $\ns-(p+1)$ recursively by $(\ns-p)-1$ for $p\geq 1$. We also set
\[\ns\cdot p:=\big(\underbrace{n_1,n_1,\ldots,n_1}_{p\ \text{times}}\geq\underbrace{n_2,n_2,\ldots,n_2}_{p\ \text{times}}\geq\cdots\geq\underbrace{n_k,n_k,\ldots,n_k}_{p\ \text{times}}\big).\]
For $\ns=(n_1\geq \cdots \geq n_k>0)$ and $\ns'=(n'_1\geq \cdots \geq n'_{k'}>0)$ we write $\ns\geq \ns'$ if $k\geq k'$ and $n_i\geq n_i'$ for all $i=1,\ldots,k$. Define $d(\ns,\ns')=\ell$  if $\ell$ is the smallest integer with $\ns\geq \ns'-\ell$ and $\ns'\geq \ns-\ell$.\\

 If $K_i$ is obtained from $K_{i-1}$ by an RR for $i=1,\ldots,\ell$, set $\ms_i=\ms_{K_i}\cdot 2^{\ell-|K_i|}$, where $\ell$ is the largest of $|K_i|$. If each $K_i$ is oriented and the RRs are orientation-preserving, we also set $\ns_i=\ns_{K_i}\cdot 2^{\ell-|K_i|}$. We equip each $K_i$ with a marking $\ps_i$ with $|\ps_i|=\ell$ so that $(K_i,\ps_i)$ is obtained from $(K_{i-1},\ps_{i-1})$ by an RR. Theorem~\ref{thm:RTR} implies that there are homomorphisms
\[\phi_i:\H'_{K_{i-1},\ps_{i-1}}\ra \H'_{K_i,\ps_i}\quad\text{and}\quad \psi_i:\H'_{K_i,\ps_i}\ra \H'_{K_{i-1},\ps_{i-1}}\quad\text{for}\ \ i=1,\ldots,\ell, \]
such that $\phi_i\circ\psi_i=\var$ and $\psi_i\circ\phi_i=\var$. Therefore, $\ms_i\geq \ms_{i-1}-1$ and $\ms_{i-1}\geq \ms_i-1$ for $i=1,\ldots,\ell$. If $K=K_0$ and $K'=K_\ell$,  these inequalities together imply 
\[\ms_K\cdot 2^{|K'|}\geq \ms_{K'}\cdot 2^{|K|}-\ell\quad\text{and}\quad \ms_{K'}\cdot 2^{|K|}\geq \ms_K\cdot 2^{|K'|}-\ell.\]
If each $K_i$ is obtained from $K_{i-1}$ by an ORR, a similar conclusion is obtained for $\ns_K$ and $\ns_{K'}$. As a consequence, we have the following proposition.

\begin{prop}\label{prop:distance-bounds}
For any pair of (oriented) links $K,K'$, we have the inequalities
\begin{align*}
&\torsion(K,K')=d\left(\ms_K\cdot 2^{|K'|},\ms_{K'}\cdot 2^{|K|}\right)\leq u'_q(K,K')\quad\quad\text{and}\\
&\tf(K,K')=d\left(\ns_K\cdot 2^{|K'|},\ns_{K'}\cdot 2^{|K|}\right)\leq u_q''(K,K')\leq u_q(K,K').
\end{align*}
In particular, $\torsion(K)\leq u'_q(K)$ and $\tf(K)\leq u_q(K)$ .  
\end{prop}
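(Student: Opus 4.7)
The plan is to reduce the claim to a purely algebraic statement about finitely generated $\F[\var]$-modules and then apply Theorem~\ref{thm:RTR} iteratively. Since $\F[\var]$ is a PID, every bounded complex of finitely generated free $\F[\var]$-modules is chain-homotopy equivalent to a direct sum of free pieces and two-term pieces of the form $\F[\var]\xrightarrow{\var^a}\F[\var]$; consequently the chain-homotopy type of $C'_{K,\ps}$ is determined by its torsion sequence $\ms_K$ together with the free rank, and similarly for $C_{K,\ps}$ and $\ns_K$. This reduces the existence of chain maps $\fmap,\fmap'$ with $\fmap\circ\fmap'\simeq\var^k$ and $\fmap'\circ\fmap\simeq\var^k$ to the existence of $\F[\var]$-module homomorphisms between $\H'_{K,\ps}$ and $\H'_{K',\ps'}$ whose compositions equal $\var^k\cdot\mathrm{Id}$: given such module maps one lifts summand-wise (using that every map $\F[\var]/\langle\var^a\rangle\to\F[\var]/\langle\var^b\rangle$ is multiplication by some $\var^p$ and lifts canonically to a chain map between the two-term minimal models), while conversely taking homology is direct.

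The key algebraic lemma I would then prove is that for finitely generated $\F[\var]$-modules $M,N$ of equal free rank and torsion sequences $\ms,\ms'$, the smallest $k\geq 0$ admitting homomorphisms $\phi:M\to N$ and $\psi:N\to M$ with $\phi\circ\psi=\var^k\cdot\mathrm{Id}_N$ and $\psi\circ\phi=\var^k\cdot\mathrm{Id}_M$ equals $d(\ms,\ms')$. For the upper bound, sort both sequences in decreasing order, pair the torsion summands (and free summands with free summands), and on a paired cyclic quotient $\F[\var]/\langle\var^a\rangle\leftrightarrow\F[\var]/\langle\var^b\rangle$ use multiplications by $\var^{\max(0,b-a)}$ and $\var^{\max(0,a-b)}$; an extra summand $\F[\var]/\langle\var^a\rangle$ with no partner forces $k\geq a$, matching the definition of $d$. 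For the lower bound, if $\phi,\psi$ are any such maps, then on each cyclic quotient $\psi\circ\phi=\var^k$ forces $p+q\geq|a-b|$ on paired summands and $k\geq a$ on those mapping into free or shorter-torsion pieces, and a standard sorting argument shows the sorted pairing is optimal. To match the definition of $\torsion(K,K')$, which requires $|\ps|=|\ps'|$, I would pad both markings to a common size $\ell\geq\max(|K|,|K'|)$, giving homology groups with free rank $2^{\ell-1}$ and torsion sequences $\ms_K\cdot 2^{\ell-|K|}$ and $\ms_{K'}\cdot 2^{\ell-|K'|}$; the easy identity $d(\ms\cdot p,\ms'\cdot p)=d(\ms,\ms')$ then yields the $\ell$-independent formula $\torsion(K,K')=d(\ms_K\cdot 2^{|K'|},\ms_{K'}\cdot 2^{|K|})$, and the identical argument gives the $\tf$ equality.

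For the inequality $\torsion(K,K')\leq u'_q(K,K')$, I would follow the outline given immediately before the proposition: choose a shortest chain $K=K_0\leadsto K_1\leadsto\cdots\leadsto K_\ell=K'$ of rational replacements, set $\ell_0\geq\max_i|K_i|$, equip each $K_i$ with a marking $\ps_i$ of size $\ell_0$ so that $(K_i,\ps_i)$ comes from $(K_{i-1},\ps_{i-1})$ by an RR away from the markings, and apply Theorem~\ref{thm:RTR} at each step to produce $\F[\var]$-homomorphisms with compositions equal to $\var\cdot\mathrm{Id}$. Composing along the chain yields maps whose compositions equal $\var^\ell\cdot\mathrm{Id}$, so the algebraic lemma applied to $\H'_{K_0,\ps_0}$ and $\H'_{K_\ell,\ps_\ell}$ gives $d(\ms_K\cdot 2^{\ell_0-|K|},\ms_{K'}\cdot 2^{\ell_0-|K'|})\leq\ell$, and padding invariance finishes the bound. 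The same argument using the $\psi,\psi'$ of Theorem~\ref{thm:RTR} (which require the replacements to be orientation-preserving) gives $\tf(K,K')\leq u''_q(K,K')\leq u_q(K,K')$. The main technical obstacle is the lower-bound half of the algebraic lemma — showing that sorted pairing is optimal and no clever non-diagonal map does better; this is classical but requires careful bookkeeping of elementary divisors, while everything else (Theorem~\ref{thm:RTR}, padding invariance of $d$, and the structure theorem for PID modules) is essentially routine.
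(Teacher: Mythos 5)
Your proposal is correct and follows essentially the same route as the paper: reduce via the PID structure theorem to the $\F[\var]$-module structure of $\H'_{K,\ps}$ (resp. $\H_{K,\ps}$), iterate Theorem~\ref{thm:RTR} along a minimal chain of rational replacements, and use the algebraic fact that the smallest $k$ admitting maps composing to $\var^k$ in both directions equals $d$ of the (padded) torsion sequences. The main thing you add is an explicit statement and sketch of the key algebraic lemma that the paper invokes implicitly through its remark that the chain homotopy types of $C_{K,\ps}$, $C'_{K,\ps}$ are determined by $\ns_K$, $\ms_K$; your identification of the lower-bound half of that lemma as the only nonroutine step is accurate.
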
  

If $K\#K'$ denotes the connected sum of two links $K$ and $K'$ (where a distinguished component of each link is connected to the other link), a Heegaard diagram for $K\#K'$ may be constructed by taking the connected sum of Heegaard diagrams for $K$ and $K'$ in an appropriate sense. This implies that  $\CFT(K\#K')$ is chain homotopy equivalent to  $\CFT(K)\otimes_{\F[\var,\varw]} \CFT(K')$. The K\"uneth formula implies the following corollary. 

\begin{cor}\label{cor:connected-sum}
For every two oriented links $K$ and $K'$, we have
\[\tf(K\#K')=\max\{\tf(K),\tf(K')\}\quad\text{and}\quad\torsion(K\#K')=\max\{\torsion(K),\torsion(K')\}.\]
\end{cor}

Suppose that $K$ has thin Floer homology, $\ns_K=(n_1\geq \cdots\geq n_k)$ and $\ms_K=(m_1\geq \cdots\geq m_{k'})$. Then $n_1=\cdots=n_k=m_1=\cdots=m_{k'}=1$. Therefore, we have the following corollary:

\begin{cor}\label{cor:alternation}
The rational distance of a link $K$ from the space $\Qcal$ of links with thin link Floer homology is at least $\torsion(K)-1$, while its OR-distance from $\Qcal$ is at least $\tf(K)-1$. In particular, the rational distance and the OR-distance of $K$ from the set of quasi-alternating knots is bounded below by $\torsion(K)-1$ and $\tf(K)-1$, respectively.
\end{cor}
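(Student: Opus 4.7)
The plan is to derive both bounds as formal consequences of Proposition~\ref{prop:distance-bounds}, using the observation (recorded in the paragraph preceding the statement) that $K'\in\Qcal$ forces both $\ns_{K'}$ and $\ms_{K'}$ to be constant sequences of $1$'s. I would fix an arbitrary $K'\in\Qcal$ and aim to prove $u'_q(K,K')\geq \torsion(K)-1$ and $u''_q(K,K')\geq \tf(K)-1$; taking the infimum over $K'\in\Qcal$ then yields the stated bounds on $u'_q(K,\Qcal)$ and $u''_q(K,\Qcal)$.

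The main step is to unwind the definition of $d(\cdot,\cdot)$. Since the operation $\cdot\,2^{|K|}$ only replicates entries, the sequence $\ms_{K'}\cdot 2^{|K|}$ remains a string of $1$'s, while $\ms_K\cdot 2^{|K'|}$ still has leading entry $m_1=\torsion(K)$. For the termwise comparison $\ms_{K'}\cdot 2^{|K|}\geq \ms_K\cdot 2^{|K'|}-\ell$ to hold, one needs $m_1-\ell\leq 1$, i.e.\ $\ell\geq \torsion(K)-1$; so by the definition of $d$ we obtain $\torsion(K,K')=d(\ms_K\cdot 2^{|K'|},\ms_{K'}\cdot 2^{|K|})\geq \torsion(K)-1$. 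Proposition~\ref{prop:distance-bounds} then yields $u'_q(K,K')\geq \torsion(K)-1$, and the identical argument with $\ns$ in place of $\ms$ (and $\tf$ in place of $\torsion$) delivers the OR-distance bound through $u''_q(K,K')\geq \tf(K,K')\geq \tf(K)-1$.

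The ``in particular'' clause is immediate once one recalls the already-cited results of \cite{OS-alternating,MO-quasi} placing every quasi-alternating knot in $\Qcal$. I do not anticipate any real obstacle: the argument is essentially a one-line unwinding once Proposition~\ref{prop:distance-bounds} is in hand. The only minor bookkeeping is that the complementary inequality $\ms_K\cdot 2^{|K'|}\geq \ms_{K'}\cdot 2^{|K|}-\ell$ is automatic for $\ell\geq 1$, because subtracting $1$ from a tuple of $1$'s yields the empty sequence, so only the second inequality in the definition of $d$ does any work in producing the lower bound.
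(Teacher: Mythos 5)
Your argument is correct and reproduces the paper's own reasoning: the paper simply notes in the paragraph preceding the corollary that thinness forces $\ns_{K'}$ and $\ms_{K'}$ to be all-ones tuples, and then invokes Proposition~\ref{prop:distance-bounds}; your unwinding of $d(\cdot,\cdot)$ (only the inequality $\bfrak\geq\afrak-\ell$ bites, forcing $\ell\geq m_1-1$, while the other inequality is vacuous once $\ell\geq 1$ because $\ms_{K'}-1$ is empty) is exactly the bookkeeping the paper leaves implicit.
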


Let  the marking $\ps$ of a knot $K$ consist of a single marked point. Since $\F$ is a field,  $C_{K,\ps}$ is filtered chain homotopy equivalent to a complex  generated by $\x_0,\ldots,\x_{2n}$ in Alexander gradings $s_0, \ldots, s_{2n}$ and homological gradings $\mu_0, \ldots,\mu_{2n}$ respectively, with the differential 
\[d_{K,\ps}(\x_i)=\begin{cases} 0&\text{if}\ i\ \text{is even}\\
\var^{s_i-s_{i+1}}\cdot \x_{i+1}&\text{if}\ i\ \text{is odd}
\end{cases},\]
while $s_i-s_{i+1}>0$ for all odd values of $i$ (i.e. we start with the $E_2$ term of the corresponding spectral sequence). In particular, $\mu_{i+1}=\mu_i-1$ if $i$ is odd, and $\mu_0=0$.  Therefore,
\begin{align*}
\tau(K)=s_0\quad\quad\text{and}\quad\quad \tf(K)=\max\big\{s_{2j-1}-s_{2j}\ \big|\ j=1,2,\ldots,n\big\},
\end{align*}
where $\tau(K)$ denotes the Ozsv\'ath-Szab\'o tau invariant \cite{OS-four-ball}. Let us now assume that $\tf(K)=1$. It then follows that $s_{2j-1}-s_{2j}=\mu_{2j-1}-\mu_{2j}=1$ for $j=1,\ldots,n$. Therefore,
\begin{align*}
Q_K(q,t)&:=\sum_{i,j} \mathrm{dim}\left(\widehat{\mathrm{HFK}}_j(K,i)\right)\cdot q^jt^i=q^{\mu_0}t^{\tau(K)}+(1+qt)\cdot\sum_{j=1}^n q^{\mu_{2j}}t^{s_{2j}},
\end{align*}
which is of the form $t^{\tau(K)}+(1+qt)\cdot P_K(q,t)$. This observation implies the following corollary.

\begin{cor}\label{cor:Alexander-polynomial}
If $\tf(K)=1$ for a knot $K$, then  $Q_K(q,t)-t^{\tau(K)}$ is divisible by $1+qt$ and all the coefficients of $(Q_K(q,t)-t^{\tau(K)})/(1+qt)$ are non-negative integers. 
\end{cor}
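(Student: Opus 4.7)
The plan is to directly exploit the filtered canonical form of the knot Floer complex that has already been set up in the paragraph preceding the corollary. First I would recall that, since $\F[\var]$ is a PID and $\F$ is a field, $C_{K,\ps}$ admits a filtered basis $\x_0,\ldots,\x_{2n}$ with bigradings $(s_i,\mu_i)$, such that $d_{K,\ps}(\x_{2j-1})=\var^{s_{2j-1}-s_{2j}}\x_{2j}$ for $j=1,\ldots,n$ and $d_{K,\ps}$ vanishes on $\x_0$ and on the even-indexed generators. From the setup we have $s_0=\tau(K)$, $\mu_0=0$, and the differential lowers the homological grading by one, so $\mu_{2j-1}-\mu_{2j}=1$ for each $j$.

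Next, I would invoke the hypothesis $\tf(K)=1$. By the formula $\tf(K)=\max_j(s_{2j-1}-s_{2j})$ together with the fact that each such gap is a positive integer (we started from the $E_2$ page), this forces
\begin{align*}
s_{2j-1}-s_{2j}=1 \quad \text{for all } j=1,\ldots,n.
\end{align*}
Then setting $\var=0$ identifies $C_{K,\ps}\otimes_{\F[\var]}\F$ with $\widehat{\CFT}(K,\ps)$, and each generator $\x_i$ contributes the monomial $q^{\mu_i}t^{s_i}$ to $Q_K(q,t)$. Grouping the contributions into the isolated generator $\x_0$ and the length-two staircases $\{\x_{2j-1},\x_{2j}\}$ yields
\begin{align*}
Q_K(q,t)=t^{\tau(K)}+\sum_{j=1}^n\bigl(q^{\mu_{2j}+1}t^{s_{2j}+1}+q^{\mu_{2j}}t^{s_{2j}}\bigr)=t^{\tau(K)}+(1+qt)\sum_{j=1}^n q^{\mu_{2j}}t^{s_{2j}}.
\end{align*}
Divisibility of $Q_K(q,t)-t^{\tau(K)}$ by $1+qt$ is immediate, and non-negativity of the coefficients of the quotient $P_K(q,t)=\sum_j q^{\mu_{2j}}t^{s_{2j}}$ is automatic since $P_K$ is a sum of monomials with coefficient $+1$ (collecting repeated monomials only increases coefficients in $\Z_{\geq 0}$).

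The only substantive content — and hence the only potential obstacle — is the existence of the filtered canonical form on which everything rests. This is not really an obstacle here, since it has already been stated in the preceding discussion and is a standard consequence of the structure theorem for modules over the PID $\F[\var]$ applied to the filtered complex $C_{K,\ps}$, combined with the observation that a knot (having one component) produces exactly one free $\F[\var]$ summand, namely the one generated by $\x_0$ in Alexander grading $\tau(K)$. With that in hand, the corollary reduces to the short bookkeeping computation above.
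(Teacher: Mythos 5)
Your argument is correct and coincides with the paper's own proof: both invoke the filtered canonical form of $C_{K,\ps}$ established in the preceding paragraph, deduce from $\tf(K)=1$ that every gap $s_{2j-1}-s_{2j}$ equals $1$, and then read off $Q_K(q,t)=t^{\tau(K)}+(1+qt)\sum_j q^{\mu_{2j}}t^{s_{2j}}$, from which divisibility and non-negativity of the quotient's coefficients are immediate.
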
 

\section{Examples and sample computations}\label{sec:examples}
In this section, we examine the bounds constructed in the previous sections in a number of examples. The first example is of course the case of the torus knots.

\begin{example}\label{ex:torus-knot}
Let $K=T_{p,q}$ be the $(p,q)$ torus knot with $0<p<q$. Assume that 
\[\frac{(t^{pq}-1)(t-1)}{(t^p-1)(t^q-1)}=\sum_{i=0}^{2n}(-1)^it^{a_i}\]
for a sequence $(p-1)(q-1)=a_0>a_1>...>a_{2n}=0$ of integers. Let us define $b_i=a_{i-1}-a_{i}$ for $i=1,\ldots,2n$ and set $\bs=\bs_{p,q}=(b_1,\ldots,b_{2n})$. Note that $b_i$ are all positive integers. The knot Floer complex associated with $K$ is then determined by $\bs$ (\cite{OS-lspace} and \cite[Example 5.1]{AE-unknotting}). In particular, $(C_K,d_K)$ and $(C'_K,d'_K)$ are  freely generated over $\F[\var]$ by the generators $\{\x_i\}_{i=0}^{2n}$ and are equipped with the differential 
\[d_K(\x_i)=\begin{cases}
\var^{b_{i}}\x_{i-1}
\quad&\text{if}\ i\ \text{is odd}\\
0\quad &\text{if}\ i\ \text{is even}
\end{cases}\quad\text{and}\quad
d'_K(\x_i)=\begin{cases}
\var^{b_{i}}\x_{i-1}+\var^{b_{i+1}}\x_{i+1}
\quad&\text{if}\ i\ \text{is odd}\\
0\quad &\text{if}\ i\ \text{is even}.
\end{cases}\]
Therefore, $\H_K$ is generated by $\h_i=[\x_{2i}]$ for $i=0,\ldots,n$, with $\h_n$ free and $\h_i$ a torsion element of order $b_{2i+1}$ for $i=0,\ldots,n-1$. Moreover, $\H'_K$ is generated by $\h'_i=[\x_{2i}]$ for $i=0,\ldots,n$, while
\[\var^{b_{2i-1}}\h'_{i-1}=\var^{b_{2i}}\h'_i\quad\quad i=1,\ldots,n.\]
In particular, it follows that
\begin{align*}
\tf(K)=\max\big\{b_{2i+1}\ \big|\ i=1,\ldots,n\big\}\quad\text{and}\quad\torsion(K)=\max \big\{\min\left\{b_{2i-1},b_{2i}\right\}\ \big|\ i=1,\ldots,n\big\}. 
\end{align*}
Let us restrict our attention to the case where $q=pk+1$. In this case we have 
\begin{align*}
&\bs_{p,pk+1}=(\underbrace{1,p-1,\ldots,1,p-1}_{k\ \text{times}},\underbrace{2,p-2,\ldots,2,p-2}_{k\ \text{times}},\ldots,\underbrace{p-1,1,\ldots,p-1,1}_{k\ \text{times}})\\
\Rightarrow\quad
&\tf(T_{p,pk+1})=p-1\quad\text{and}\quad\torsion(T_{p,pk+1})=\max\big\{\min\big\{i,p-i\big\}\ \big|\ i=1,\ldots,p-1\big\}=\left\lfloor\frac{p}{2}\right\rfloor.
\end{align*}

\begin{figure}
\def\svgwidth{\textwidth}
\begin{center}
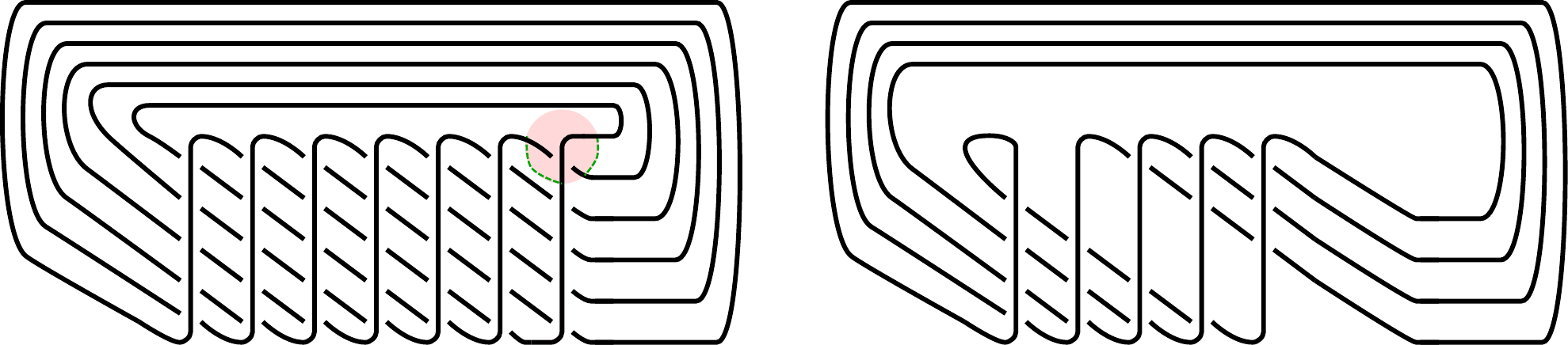
\caption{\label{fig:torus-knot} A single RR changes $T_{p,pk+1}$ (the left diagram) to $T_{p-2,(p-2)k+1}$ (the right diagram). The case $p=6$ and $k=1$ is illustrated.}
\end{center}
\end{figure}

\begin{cor}\label{torus-knot}
For the torus knot $T_{p,pk+1}$ we have \[u_q\left(T_{p,pk+1}\right)\geq u''_q\left(T_{p,pk+1}\right)\geq p-1\quad\text{and}\quad u'_q\left(T_{p,pk+1}\right)=\left\lfloor \frac{p}{2}\right\rfloor.\]
\end{cor}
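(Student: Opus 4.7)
The plan splits into verifying the lower bounds and the matching upper bound for $u'_q$ separately. For the lower bounds, Example~\ref{ex:torus-knot} has already computed $\tf(T_{p,pk+1})=p-1$ and $\torsion(T_{p,pk+1})=\lfloor p/2\rfloor$ directly from the sequence $\bs_{p,pk+1}$ that determines the knot Floer complex of $T_{p,pk+1}$. Combining these computations with Proposition~\ref{prop:distance-bounds} (which states $\tf(K)\leq u''_q(K)\leq u_q(K)$ and $\torsion(K)\leq u'_q(K)$ for any knot $K$) immediately yields
\[
u_q(T_{p,pk+1})\geq u''_q(T_{p,pk+1})\geq \tf(T_{p,pk+1})=p-1\quad\text{and}\quad u'_q(T_{p,pk+1})\geq \torsion(T_{p,pk+1})=\lfloor p/2\rfloor.
\]
So half of the corollary is essentially free, modulo invoking the theorems already in place.

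For the matching upper bound $u'_q(T_{p,pk+1})\leq \lfloor p/2\rfloor$, the plan is to exhibit an explicit sequence of rational tangle replacements unknotting $T_{p,pk+1}$ using exactly $\lfloor p/2\rfloor$ moves. The key ingredient is Figure~\ref{fig:torus-knot}, which shows how to place a ball $B$ around four adjacent strands in the standard $p$-braid presentation of $T_{p,pk+1}$ so that replacing $T_{p,pk+1}\cap B$ (a rational $2$-tangle) by another rational tangle turns $T_{p,pk+1}$ into $T_{p-2,(p-2)k+1}$. Once this single-step reduction is verified (see below), iterating it gives a chain
\[
T_{p,pk+1}\leadsto T_{p-2,(p-2)k+1}\leadsto T_{p-4,(p-4)k+1}\leadsto\cdots,
\]
which terminates after $\lfloor p/2\rfloor$ steps at either $T_{0,1}$ (when $p$ is even) or $T_{1,k+1}$ (when $p$ is odd), both of which are the unknot. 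This matches the combinatorial count $k(p,pk+1)=\lfloor p/2\rfloor$ already recorded in the introduction.

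The main technical obstacle is checking that the local picture in Figure~\ref{fig:torus-knot} genuinely achieves the claimed reduction. Concretely, one needs to identify the four strands caught inside $B$ in the standard torus braid diagram, write down the rational tangles before and after in Conway's notation, and verify by planar isotopy and Markov-type moves that the resulting closure is isotopic to $T_{p-2,(p-2)k+1}$. This is a finite diagrammatic calculation rather than a conceptual difficulty; the $p=6$, $k=1$ case drawn in the figure serves as the template, and the general case is structurally identical. After this is established, the corollary follows by induction on $p$, and no additional Heegaard Floer input is required beyond what has already been used for the lower bound.
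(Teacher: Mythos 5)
Your proposal is correct and follows exactly the paper's argument: the lower bounds come from the Example~\ref{ex:torus-knot} computations of $\tf$ and $\torsion$ combined with Proposition~\ref{prop:distance-bounds}, and the matching upper bound comes from iterating the crossing-resolution RR of Figure~\ref{fig:torus-knot} that sends $T_{p,pk+1}$ to $T_{p-2,(p-2)k+1}$, terminating at the unknot after $\lfloor p/2\rfloor$ steps.
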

\begin{proof}
The above observation implies that $u_q''(T_{p,pk+1})\geq p-1$ while  $u'_q(T_{p,pk+1})\geq \lfloor p/2\rfloor$. To see the equality in the latter inequality, note that $T_{p,pk+1}$ may be unknotted by $\lfloor p/2\rfloor$ RRs.  Figure~\ref{fig:torus-knot} illustrates how a single RR (which is in fact a resolution of one of the crossings) changes  $T_{p,pk+1}$ to $T_{p-2,(p-2)k+1}$. Therefore, by resolving $\lfloor p/2\rfloor$ crossings which are chosen appropriately, we arrive at the unknot.
\end{proof}

For other values of $q>p$,  $u'_q(T_{p,q})$ may be smaller than $\lfloor p/2\rfloor$. In fact, if we resolve one of the crossings in $T_{p,q}$ so that the connectivity is preserved, we obtain $T_{p',q'}$, where $p'$ and $q'$ are determined as follows. Suppose that $i$ is the least positive integer so that $iq=jp\pm 1$ for some positive integer $j$ (thus, $i\leq p/2$). Then $q'=q-2j$ and $p'=p-2i$. In this situation we write $(p,q)\leadsto (p',q')$. Let $k(p,q)$ denote the least number $k$ so that 
\[(p,q)=(p_0,q_0)\leadsto (p_1,q_1)\leadsto (p_2,q_2)\leadsto \cdots\leadsto (p_k,q_k),\]
where $p_k\in\{0,1\}$. This means that $T_{p,q}$ may be turned into the unknot by $k(p,q)$ crossing resolutions. In particular, $u_q(T_{p,q})$ is at most $k(p,q)\leq p/2$. 

\begin{cor}\label{cor:torus-knot-general}
If $1<p<q$ are relatively prime integers, we have   
\[\torsion(T_{p,q})\leq u'_q(T_{p,q})\leq k(p,q)\quad\quad\text{and}\quad\quad\tf(T_{p,q})=p-1.\]
\end{cor}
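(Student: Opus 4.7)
The proof splits into three assertions. The inequality $\torsion(T_{p,q}) \le u'_q(T_{p,q})$ is Proposition~\ref{prop:distance-bounds} applied to $K = T_{p,q}$. The inequality $u'_q(T_{p,q}) \le k(p,q)$ follows from the discussion preceding the corollary: each arrow $(p,q) \leadsto (p',q')$ is realized by a single crossing resolution (an RR) that changes $T_{p,q}$ into $T_{p',q'}$, so $k(p,q)$ such replacements lead to the unknot. It remains to prove $\tf(T_{p,q}) = p-1$, which is the substantive claim.

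By Example~\ref{ex:torus-knot}, $\tf(T_{p,q})$ equals the maximum of the odd-indexed entries of $\bs_{p,q} = (b_1,\ldots,b_{2n})$. The first step is to reduce this to $\max_i b_i$: the palindromic symmetry $\Delta_{T_{p,q}}(t) = t^{(p-1)(q-1)} \Delta_{T_{p,q}}(1/t)$ gives $a_i + a_{2n-i} = 2g$ with $2g = (p-1)(q-1)$, and hence $b_i = b_{2n+1-i}$, so the odd- and even-indexed subsequences of $\bs_{p,q}$ coincide as multisets. Next I would introduce the signed counting function $S(m) = \sum_{a_i \le m}(-1)^i$ and derive the recursion $S(m) - S(m-p) = \mathbf{1}_{m \in \{0, q, 2q, \ldots, (p-1)q\}}$ by comparing coefficients in the identity
\[
\Delta_{T_{p,q}}(t) \cdot (1 + t + \cdots + t^{p-1}) \;=\; 1 + t^q + t^{2q} + \cdots + t^{(p-1)q},
\]
which is merely a rewriting of the defining formula for $\Delta_{T_{p,q}}$. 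Since the residues of $0, q, 2q, \ldots, (p-1)q$ modulo $p$ exhaust $\mathbb{Z}/p\mathbb{Z}$, iterating the recursion identifies $S(m)$ with the indicator function of the numerical semigroup $\Gamma = \langle p, q \rangle$. Consequently each $b_i$ is the length of a maximal run of consecutive integers in $\Gamma$ (for $i$ even) or in its complement (for $i$ odd), all inside $[0, 2g]$.

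The final step is a two-sided estimate on the maximal run length. For the upper bound, if $\{a, a+1, \ldots, a+p-1\} \subset \Gamma$ then these cover every residue class modulo $p$; closing under addition by $p$ puts every integer $\ge a$ into $\Gamma$. The Frobenius number $F = pq - p - q = 2g - 1$ is a gap, forcing $a \ge 2g$, which is incompatible with $a + p - 1 \le 2g$ when $p > 1$. Hence $\max_i b_i \le p - 1$. For the lower bound, the classical symmetry $n \in \Gamma \Leftrightarrow F - n \notin \Gamma$, valid for $0 \le n \le F$ in a two-generated numerical semigroup, places $F - 1, F - 2, \ldots, F - (p-1)$ all in $\Gamma$, because $1, 2, \ldots, p-1$ lie below the smallest nonzero generator $p$ and are therefore gaps. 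Since $F \notin \Gamma$ and $F - p \notin \Gamma$ (as $p \in \Gamma$), this exhibits a maximal run of exactly $p-1$ elements of $\Gamma$ just below the Frobenius gap, yielding $\max_i b_i \ge p - 1$.

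The main obstacle is translating the algebraic data $\bs_{p,q}$ into the combinatorics of $\Gamma$; once $S(m) = \mathbf{1}_{m \in \Gamma}$ is in hand, both estimates reduce to elementary arguments about numerical semigroups generated by two coprime integers.
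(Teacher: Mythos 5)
Your proof is correct, and it is substantially more detailed than the paper's. The paper handles the first displayed inequality exactly as you do, via Proposition~\ref{prop:distance-bounds} and the preceding discussion of crossing resolutions; for $\tf(T_{p,q})=p-1$ it merely records that $\bs_{p,q}$ has even length, that its last two entries are $p-1$ and $1$, and that no $b_i$ exceeds $p-1$, leaving all verification to the reader. You instead identify the signed counting function $S$ with the indicator of the numerical semigroup $\Gamma=\langle p,q\rangle$, interpret the $b_i$ as maximal run lengths in $\Gamma$ or in its complement inside $[0,2g]$, and obtain both bounds on $\max_i b_i$ from the Frobenius number $pq-p-q$ together with the symmetry of a two-generated semigroup. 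This is a genuine argument where the paper only gestures. One structural remark: since $\tf$ is a maximum over the odd-indexed $b_i$ only, the detour through the palindromic symmetry $b_i=b_{2n+1-i}$ (used to equate the odd- and even-indexed maxima) can be bypassed. Once $S$ is identified with the indicator of $\Gamma$, the gap run $\{1,\dots,p-1\}$ sits directly below the generator $p$ and above $0\in\Gamma$, so $a_{2n}=0$, $a_{2n-1}=1$, $a_{2n-2}=p$, giving $b_{2n}=1$ and $b_{2n-1}=p-1$; and any $p$ consecutive positive integers contain a positive multiple of $p\in\Gamma$, so no gap run has length $\geq p$. This recovers precisely the three assertions the paper states without proof, without invoking the reflection $b_i=b_{2n+1-i}$. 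Your route is longer but self-contained; the paper's is quicker but relies on unproved (if classical) facts about the gap structure of $\Delta_{T_{p,q}}$.
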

\begin{proof}
We have already proved the first claim. For the second claim, note that the number of terms in $\bs_{p,q}$ is always even and that the last two terms in $\bs_{p,q}$ are always $p-1$ and $1$. Moreover, no $b_i$ is greater than $p-1$. These observations suffice to give the second claim.
\end{proof}
\end{example}

\begin{figure}%[ht!]
\begin{center}
\includegraphics[scale=0.5]{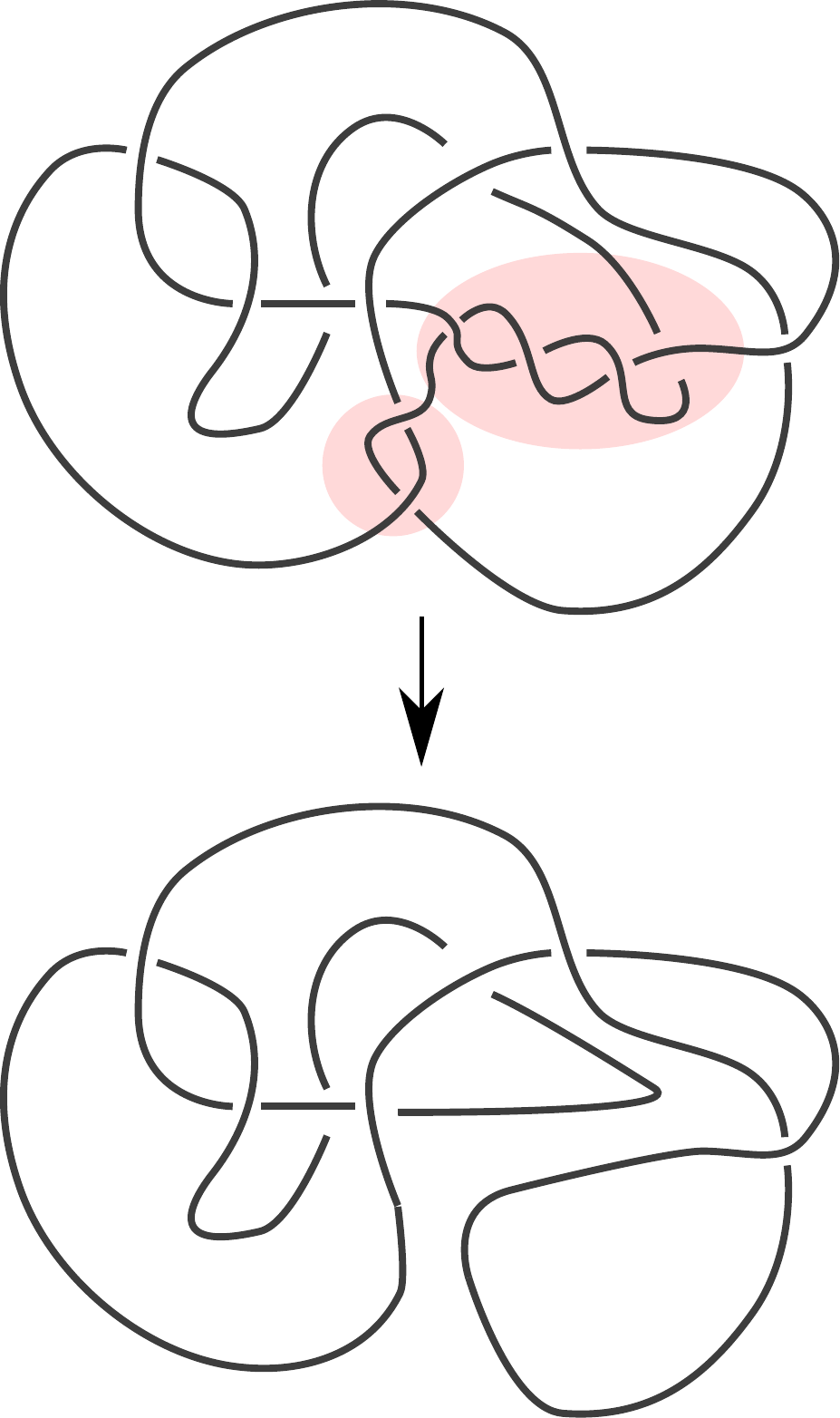}\ \ \ \ \ 
\includegraphics[scale=0.45]{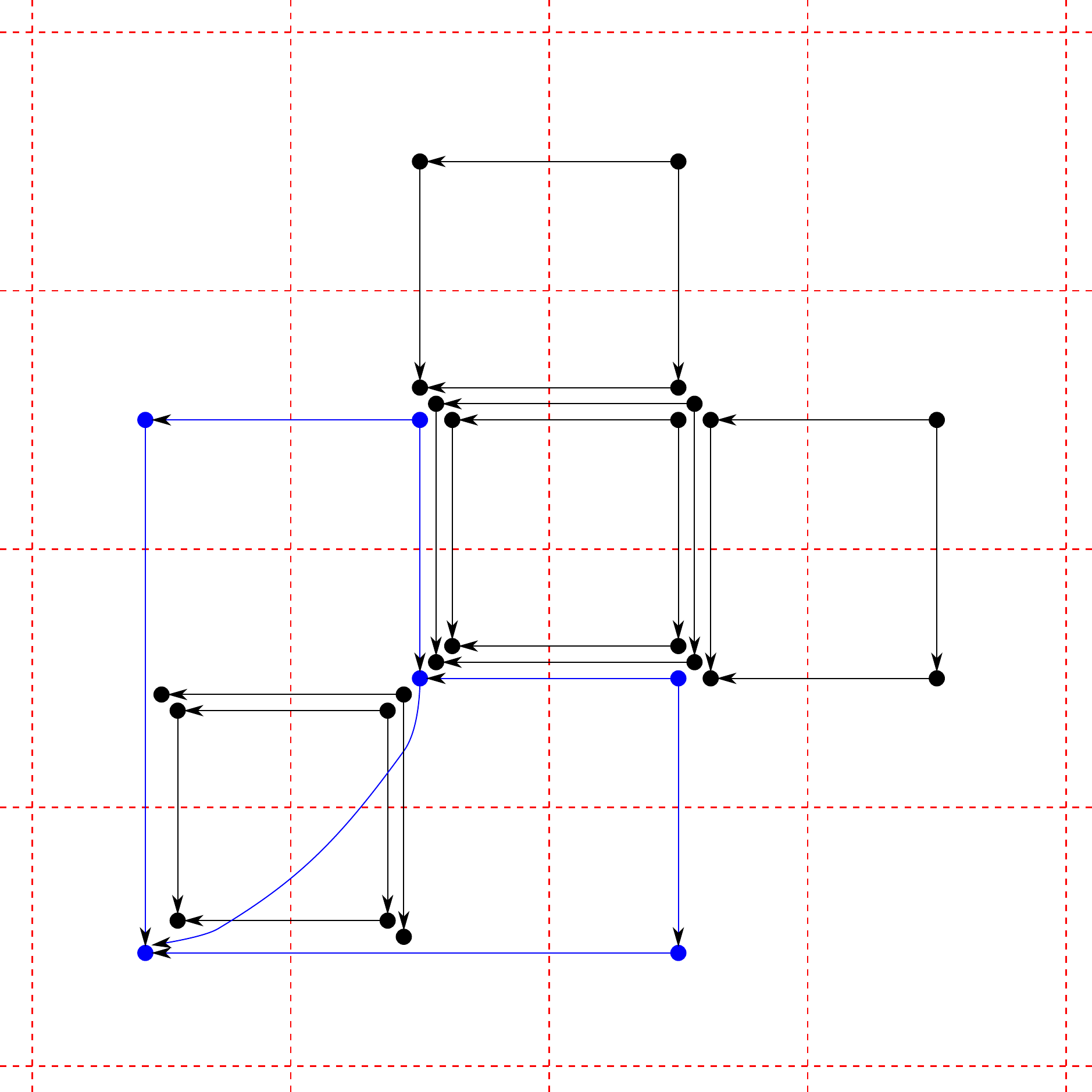}
\caption{
The knot $12n_{404}$ and the corresponding knot chain complex.
}\label{fig:12n404}
\end{center}
\end{figure}  

\begin{example}
The $(1,1)$ knot $K=12n_{404}$, which is illustrated on the left-hand-side of Figure~\ref{fig:12n404} is given by  the quadruple $[29,7,14,1]$ in Rasmussen's notation  \cite[page 14]{Ras-2}, and the corresponding knot chain complex  may be computed combinatorially (c.f. \cite[Example 5.4]{AE-unknotting}).  The right-hand-side of Figure~\ref{fig:12n404} describes the knot chain complex associated with $K$. Each dot in the diagram represents a generator. An arrow which connects a dot corresponding to a generator $\x$ to a dot representing a generator $\y$ and cuts $i$ vertical lines and $j$ horizontal lines corresponds to the contribution of $\var^i\varw^j\y$ to $d(\x)$. The blue dots generate a sub-complex which is more interesting for us. When we set $\varw=\var$ (to obtain the chain complex $C_K$) it follows that the homology of the sub-complex generated by the blue dots is $\F^2\oplus (\F[\var]/\langle \var^2\rangle)$. In fact, we may quickly compute 
\[\H'_K=\F^{13}\oplus\F[\var]\oplus\frac{\F[\var]}{\langle\var^2\rangle}.\]
In particular, $\torsion(12n_{404})=2$. Note that the PR-unknotting number of $12n_{404}$ is at most $2$. To see this, use the two balls determined by red disks in Figure~\ref{fig:12n404} (left), for rational  replacements, which give the unknot illustrated on the bottom-left of the aforementioned Figure. Since $\torsion(12n_{404})=2$, it follows that both the rational unknotting number and the PR-unknotting number of $12n_{404}$ are equal to $2=\torsion(12n_{404})=\tf(12n_{404})$. As discussed in \cite[Example 5.4]{AE-unknotting}, the unknotting number of $12n_{404}$ is not known, while it satisfies $2\leq u(12n_{404})\leq 3$.
\end{example}

\begin{example} The chain complexes associated with the $(2,-1)$ cable of $T_{2,3}$, which is denoted by $K=T_{2,3;2,-1}$ and the $(2,-3)$ cable of $T_{2,3}$, which is denoted by $K'=T_{2,3;2,-3}$ are studied in \cite{AE-unknotting} and are both illustrated in Figure~\ref{fig:cable-complexes}. It follows that \[u_q(K),u_q(K')\geq \tf(K)=\tf(K')=2\quad\text{and}\quad u_q'(K)=u_q'(K')=\torsion(K)=\torsion(K')=1.\] Note that the a single RR is sufficient for unknotting the $(2,2k+1)$-cable of any knot. 
\begin{figure}
\begin{center}
\def\svgwidth{0.45\textwidth}
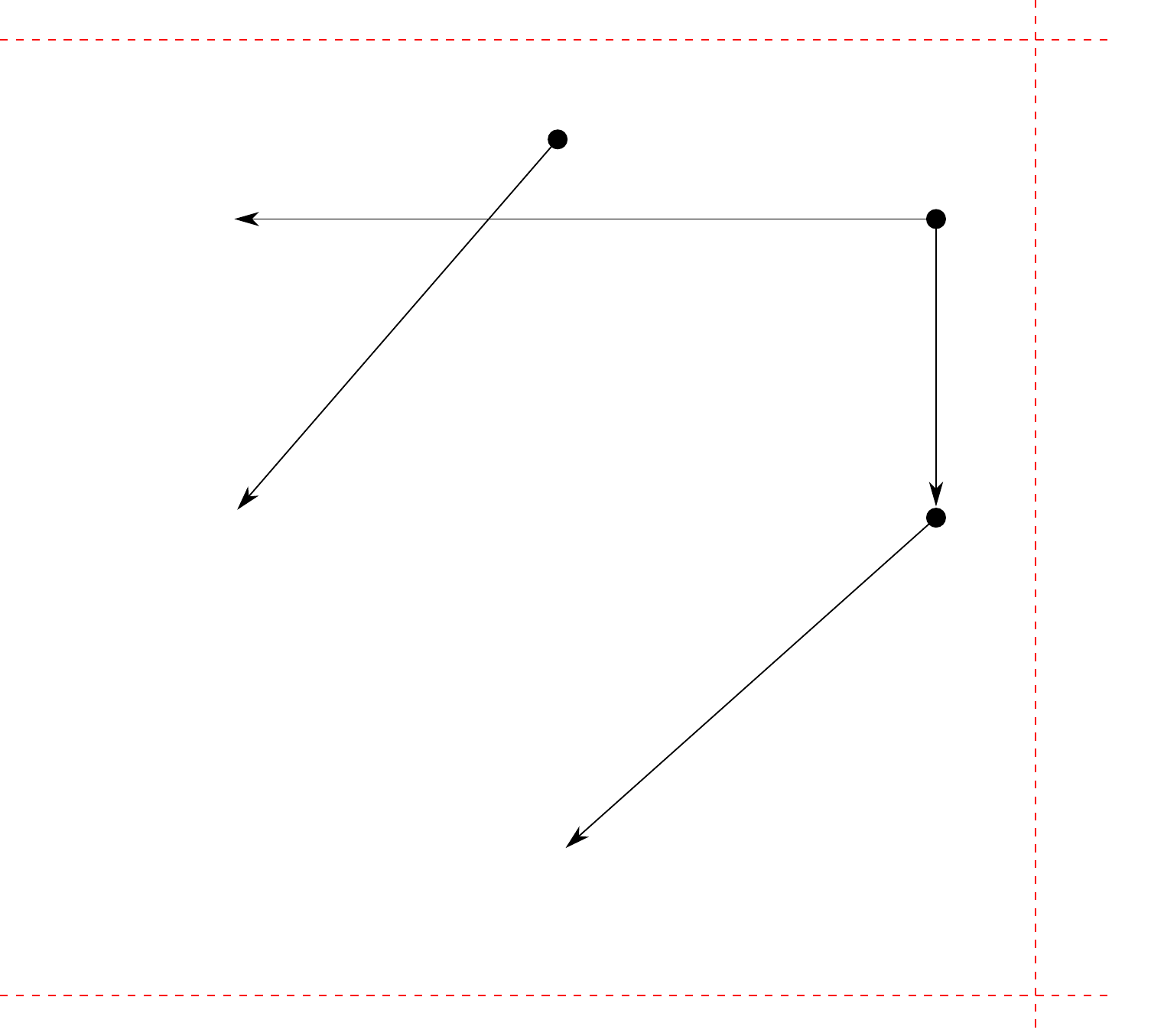
\def\svgwidth{0.45\textwidth}
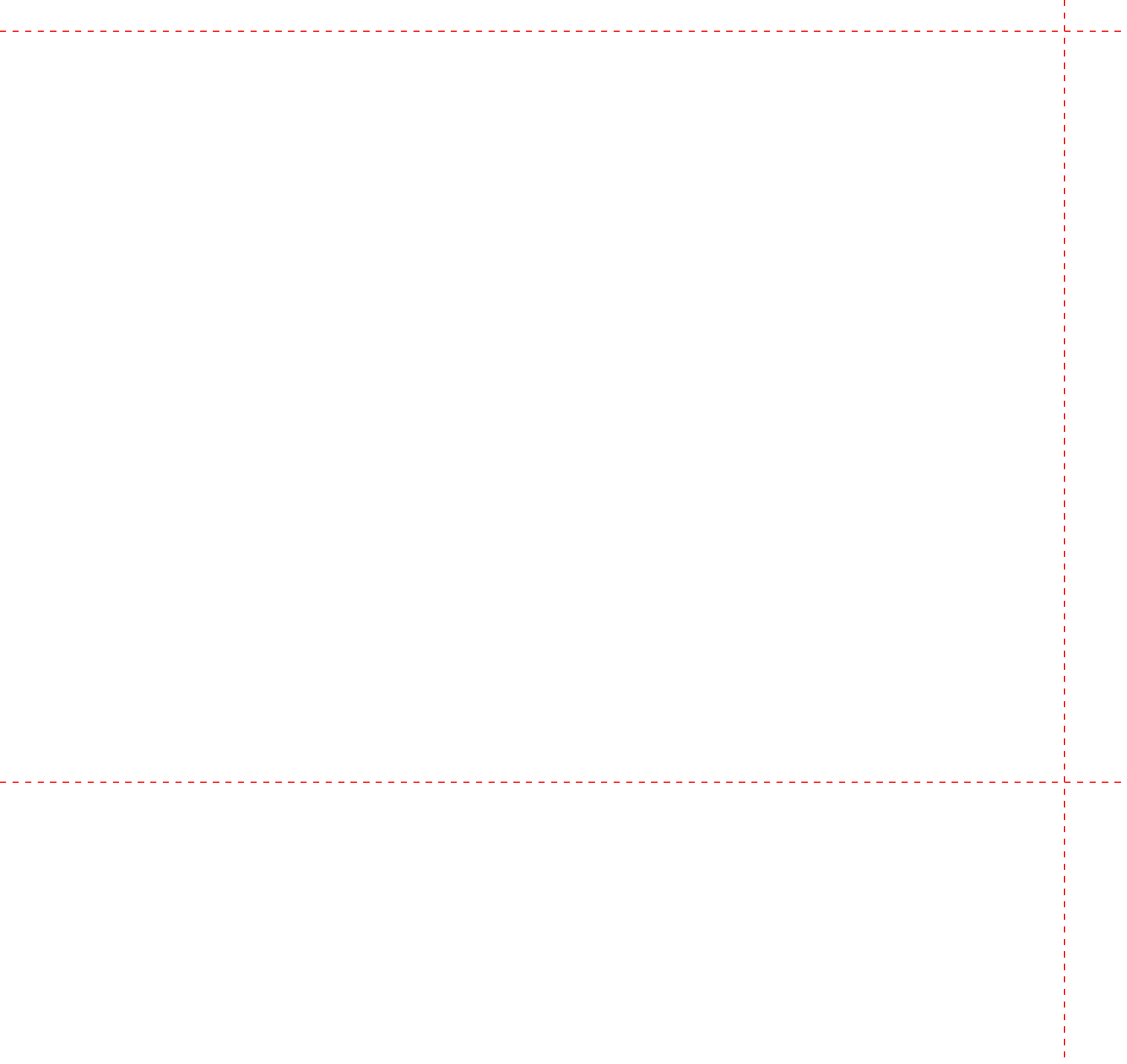
\caption{\label{fig:cable-complexes} The chain complexes associated with $T_{2,3;2,-1}$ (left) and $T_{2,3;2,-3}$ (right).}
\end{center}
\end{figure}

\end{example}

\begin{example}
According to {\sf{Knotinfo}} tables \cite{Knotinfo}, among all the knots with at most $10$ crossings the only knots which are not quasi-alternating are the ones in  
\[\Acal_{10}=\big\{8_{19}, 9_{42}, 9_{46},10_{124}, 10_{128}, 10_{132}, 10_{136},10_{139}, 10_{140}, 10_{145}, 10_{152}, 10_{153}, 10_{154}, 10_{161}\big\}.\]
For any non-trivial knot $K$ with at most $10$ crossings which is not in $\Acal_{10}$, we thus have $\torsion(K)=\tf(K)=1$. Moreover, the unknotting numbers of $9_{42}$, $10_{132}$ and $10_{136}$ are $1$ (c.f. \cite{Knotinfo}). So, for these three knots we have $\tf(K)=\torsion(K)=u(K)=1$, as well. Since $8_{19}=T_{3,4}$ and $10_{124}=T_{3,5}$, Corollary~\ref{cor:torus-knot-general} implies 
\[\tf(8_{19})=\tf(10_{124})=2\quad\quad\text{and}\quad\quad\torsion(8_{19})=\torsion(10_{124})=1.\]  One can check by hand that the remaining $9$ knots (i.e. $9_{46}$, $10_{128}$, $10_{139}$, $10_{140}$, $10_{145}$, $10_{152}$, $10_{153}$, $10_{154}$ and $10_{161}$) may be changed to an alternating knot with a single PRR. Therefore, for these latter $9$ knots  we have $\tf(K),\torsion(K)\leq 2$.  On the other hand, the value of the Ozsv\'ath-Szab\'o polynomial $Q_K(q,t)$ for these $9$ knots are listed in the following table (these computations are borrowed from \cite{BG-computations}):
\begin{equation}\label{eq:HFK-computations}
\arraycolsep=1.2pt\def\arraystretch{1.5}
\begin{array}{|c|c|c|}
\hline
\text{Name}&\tau(K)&Q_K(q,t)\\
\hline
%8_{19} & -3 & t^{-3}+qt^{-2}+q^2+q^5t^2+q^6t^3 \\
%\hline
%9_{42} & 0 & q^{-1}t^{-2}+2t^{-1}+1+2q+2q^2t+q^3t^2 \\
%\hline
9_{46} & 0 & 2q^{-1}t^{-1} + 5 + 2qt \\
\hline
%10_{124} & 4 & q^{-8}t^{-4} + q^{-7}t^{-3} + q^{-4}t^{-1} + q^{-3} + q^{-2}t + q^{-1}t^{3} + t^{4} \\
%\hline
10_{128} & 3 & 2q^{-6}t^{-3} + 3q^{-5}t^{-2} + q^{-4}t^{-1} + q^{-2} + q^{-2}t + 3q^{-1}t^{2} + 2t^{3}\\
\hline
%10_{132} & -1 & q^{-2}t^{-2}+2q^{-1}t^{-1} + t^{-1} + 2 + q + 2qt + q^{2}t + q^{2}t^{2} \\
%\hline
%10_{136} & 0 & q^{-3}t^{-2} + 4q^{-2}t^{-1} + 6q^{-1} + 1 + 4t + qt^{2}\\
%\hline
10_{139} & 4 & q^{-8}t^{-4} + q^{-7}t^{-3} + 2q^{-4}t^{-1} + 3q^{-3} + 2q^{-2}t + q^{-1}t^{3} + t^{4}\\
\hline
10_{140} & 0 & q^{-2}t^{-2} + 2q^{-1}t^{-1} + 3 + 2qt + q^{2}t^{2}\\
\hline
10_{145} & -2 & t^{-2} + 2t^{-1} + qt^{-1} + 4q + q^{2} + 2q^{2}t + q^{3}t + q^{4}t^{2}\\
\hline
10_{152} & -4 & t^{-4} + qt^{-3} + qt^{-2} + 4q^{2}t^{-1} + 5q^{3} + 4q^{4}t + q^{5}t^{2} + q^{7}t^{3} + q^{8}t^{4}\\
\hline
10_{153} & 0 &q^{-2}t^{-3}(1+t+2qt+2qt^2) + t^{-1} + 3 + 2qt + q^{2}t + q^{2}t^{2} + 2q^{3}t^{2} + q^{4}t^{3}\\
\hline
10_{154} & 3 & q^{-6}t^{-3}(1+qt+q^2t) + 4q^{-3}t^{-1} + 7q^{-2} + 4q^{-1}t + q^{-1}t^2+t^{2} + t^{3} \\
\hline
10_{161} & -3 & t^{-3} + t^{-2}+ qt^{-2} + 2qt^{-1} + 3q^{2} + 2q^{3}t + q^{4} t^{2}+ q^{5}t^{2} + q^{6}t^{3}\\
\hline
\end{array}
\end{equation}
It follows that $9_{46}$ and $9_{140}$ have thin knot Floer homology (therefore, $\tf(K)=\torsion(K)=1$ for these two knots). For $K=10_{128}, 10_{139}, 10_{152}, 10_{154}, 10_{161}$, the polynomial $Q_K(q,t)-t^{\tau(K)}$ is not divisible by $1+qt$. Therefore, 
\[\tf(10_{128})=\tf(10_{139})=\tf(10_{152})=\tf(10_{154})=\tf(10_{161})=2.\]
For the remaining knots $K=10_{145}, 10_{153}$, it follows from the computation of $Q_K(q,t)$ that $\tf(K)$ is forced to be $1$. 
\end{example}
% ------------------------------------------
\bibliographystyle{hamsalpha}
\newcommand{\etalchar}[1]{$^{#1}$}

\end{document}